\title[Representations on the Boundary of Rooted Trees]
 {Groups Acting on Rooted Trees and Their Representations on the Boundary} 
\author[S. Kionke]{Steffen Kionke}
\address{Karlsruher Institut f\"ur Technologie \\ Fakult\"at f\"ur Mathematik \\
  Institut f\"ur Algebra und Geometrie \\ Englerstr.2 \\
  76131 Karlsruhe \\ Germany.}
\email{steffen.kionke@kit.edu}
\thanks{The research was supported by DFG grant KL 2162/1-1.}
\date{\today}
\subjclass[2010]{Primary 20E08; Secondary 20E18, 43A65, 22D30}
\keywords{Groups acting on rooted trees, GGS-groups, Gelfand pairs, representation zeta functions}
\theoremstyle{plain}
\newtheorem{theorem}{Theorem}
\newtheorem*{theorem*}{Theorem}
\newtheorem{lemma}{Lemma}
\newtheorem{corollary}{Corollary}
\newtheorem{proposition}{Proposition}
\theoremstyle{definition}
\newtheorem{definition}{Definition}
\newtheorem{remark}{Remark}
\newtheorem{example}{Example}
\DeclareMathOperator{\id}{Id}
\DeclareMathOperator{\Hom}{Hom}
\DeclareMathOperator{\End}{End}
\DeclareMathOperator{\Aut}{Aut}
\DeclareMathOperator{\St}{St}
\DeclareMathOperator{\GL}{GL}
\DeclareMathOperator{\SL}{SL}
\DeclareMathOperator{\chr}{char}
\DeclareMathOperator{\Ind}{Ind}
\DeclareMathOperator{\Irr}{Irr}
\DeclareMathOperator{\Sym}{Sym}
\providecommand{\calT}{\mathcal{T}}
\providecommand{\calG}{\mathcal{G}}
\providecommand{\calB}{\mathcal{B}}
\providecommand{\Cinf}{C^\infty}
\providecommand{\smInd}{\underline{\Ind}}
\providecommand{\frakp}{\mathfrak{p}}
\providecommand{\bbN}{\mathbb{N}}
\providecommand{\bbR}{\mathbb{R}}
\providecommand{\bbP}{\mathbb{P}}
\providecommand{\bbQ}{\mathbb{Q}}
\providecommand{\bbZ}{\mathbb{Z}}
\providecommand{\bbF}{\mathbb{F}}
\providecommand{\bbC}{\mathbb{C}}
\providecommand{\triv}{1\!\!1}
\providecommand{\fat}[1]{\boldsymbol{#1}}
\newcommand{\up}[1]{\:^#1 }
\renewcommand{\epsilon}{\varepsilon}
\renewcommand{\phi}{\varphi}
\begin{document}

\begin{abstract}
  We consider groups that act on spherically symmetric rooted trees and study the associated
  representation of the group on the space of locally constant functions on the boundary of
  the tree. We introduce and discuss the new notion of locally $2$-transitive actions.
  Assuming local $2$-transitivity our main theorem yields a
  precise decomposition of the boundary representation into irreducible constituents.
 
  The method can be used to study Gelfand pairs and enables us to answer a question of
  Grigorchuk. To provide examples, we analyse in detail the local
  $2$-transitivity of GGS-groups. Moreover, our results can be used to determine explicit formulae for
  zeta functions of induced representations defined by Klopsch and the author. 
\end{abstract}

\maketitle

\section{Introduction}

The objects of interest in this article are groups which act on spherically symmetric rooted
trees.  This means, they act on a tree $\calT$ with a distinguished vertex $\varepsilon$,
called the root, such that every two vertices having same distance from the root have the same
finite number of neighbours.  The set $L_n$ of vertices which have distance $n$ from the root
is called the $n$-th level of the tree. Whenever a group $G$ acts on $\calT$ by
root-preserving automorphisms, then $G$ also acts on the boundary $\partial \calT$ of $\calT$.
The boundary is the set of all infinite rays starting at the root and it is equipped in a
natural way with a profinite topology.
The action of $G$ on $\partial \calT$ gives rise to representations of $G$ on spaces of
functions on the boundary.
These representations should contain information about
the action of $G$ on $\calT$ and, conversely, it would be desirable to understand such
representations via the geometric properties of the action of $G$ on
$\calT$.

An interesting and natural example of such a representation is the unitary representation $\rho^{(2)}$
of $G$ on $L^2(\partial \calT, \mu)$, where
$\mu$ is the $\Aut(\calT)$-invariant probability measure $\mu$ on the boundary.
This representation decomposes as a Hilbert space direct sum of infinitely many finite dimensional irreducible representations; c.f.~\cite{BarGri2000}.
By contrast, for certain weakly branch groups the associated Koopman
representation of $G$ on $L^2(\partial\calT,\nu)$ is irreducible for most Bernoulli measures
$\nu \neq \mu$; see \cite{DudkoGrigorchuk2017}.
In a similar vein, the representation of a weakly branch group $G$ on $\ell^2(G/\St_G(\xi))$
for a point $\xi \in \partial\calT$ is irreducible; see \cite{BarGri2000}.

In  $L^2(\partial \calT, \mu)$, however, the locally constant complex-valued functions $\Cinf(\partial \calT, \bbC)$ form a
dense subrepresentation and the structure of the unitary
representation $\rho^{(2)}$ is completely determined by the structure of the representation
$\rho_{\partial,\bbC}$ on $\Cinf(\partial \calT, \bbC)$.
We will review this close relationship in Section~\ref{sec:unitary-vs-smooth} below and we will use it to translate
results concerning $\rho^{(2)}$ to the representation $\rho_{\partial,\bbC}$.
Our objective is to describe a decomposition of $\rho_{\partial,\bbC}$
into irreducible constituents in terms of geometric data.

One way to approach this problem is via  \emph{distance transitive} (or \emph{$2$-point
  homogeneous}) actions on metric spaces; these are isometric actions which are transitive on
equidistant pairs of points.  It is well-known that a distance transitive action of a finite
group $H$ on a finite metric space $(X,d)$ yields a \emph{multiplicitiy-free} representation of $H$ on
$\bbC[X]$;
see \cite[Ex.~2.5]{CSST2007}.  In other words, $(H, \St_H(x))$ is a \emph{Gelfand pair} for
every $x \in X$.  We refer to \cite{Bump, DieudonneAna6, Gr91} for an introduction
to Gelfand pairs.  Suffice it to mention that finding and understanding
Gelfand pairs is an important issue in the representation theory of groups with applications
to statistics and probability theory; see for instance \cite{CSST2007, Diaconis}. Distance
transitive actions of groups on the boundary of finite trees were frequently used to construct
Gelfand pairs of finite groups; see \cite{DD2007, DD2012, Letac1982}.
Five examples of Gelfand pairs arising from groups acting on infinite rootes trees (e.g.\ the Grigorchuk group)
are discussed in \cite{BarGri2001}.

In the appendix of \cite{BeHa03} Bekka, de la Harpe and Grigorchuk refined the
distance-transitivity method and obtained an explicit description of the irreducible
constituents of the boundary representation $\rho_{\partial,\bbC}$ under the assumption that
$G$ acts distance transitively on every level.  In this case the boundary representation
$\rho_{\partial,\bbC}$ contains, apart from the trivial representation, exactly one
irreducible summand of dimension $|L_n|-|L_{n-1}|$ for all $n \in \bbN$. The account in
\cite{BeHa03} is based on unitary representations and can be translated via the remarks in
Section~\ref{sec:unitary-vs-smooth} to apply to~$\rho_{\partial,\bbC}$.  A direct
argument is contained in \cite[Thm.~6.3]{KiKl}.  As a consequence, the representation is
multiplicity-free and gives rise to a Gelfand pair $(\overline{G},P_\xi)$ of profinite groups
where $\overline{G}$ is the closure of $G$ in the automorphism group $\calG = \Aut(\calT)$ and
$P_\xi$ is a parabolic subgroup, i.e.\ the stabiliser in $\overline{G}$ of a point
$\xi\in \partial\calT$.

Here we generalise the result of Bekka, de la Harpe and Grigorchuk by assuming only a weak
version of distance transitivity and by working over a general field $F$ of suitable
characteristic.  In fact, the representation on locally constant functions can be defined over
an arbitrary base field~$F$. Indeed, the space $\Cinf(\partial\calT,F)$ of locally constant
$F$-valued functions on $\partial\calT$ carries a representation of $G$ by imposing
$$\rho_{\partial,F}(g)(f)(\xi) = f(g^{-1}\xi)$$
for all $g\in G$, $f \in \Cinf(\partial\calT,F)$ and $\xi\in \partial \calT$.  The nature of
the representation depends strongly on the characteristic of the underlying field and on
whether it divides the \emph{Steinitz order} of $\overline{G}$. Here we say that the
characteristic of $F$ divides the Steinitz order of $\overline{G}$, if it divides the order of
some finite continuous quotient of $\overline{G}$.  One extremal case, where $\calT$ is the
$p$-regular tree, $F$ is of characteristic $p$ and $G$ is contained in a Sylow pro-$p$
subgroup of $\Aut(\calT)$ is studied in \cite{GLNS2016}.  Here, however, we will consider the
opposite case and assume that the characteristic of $F$ \emph{does not} divide the Steinitz
order of $\overline{G}$.  In this case, the representation $\rho_{\partial,F}$ is semi-simple,
i.e., decomposes as a direct sum of finite dimensional irreducible representations.

  A spherically transitive action of $G$ on $\calT$ will be called
\emph{locally $2$-transitive}, if for all distinct vertices $u,v \in L_n$ the common
stabiliser $\St_G(u)\cap \St_G(v)$ acts transitively on $D(u)\times D(v)$. Here $D(u)$ denotes
the set of descendants of $u$, that is, the neighbours of $u$ which lie in the next level
$L_{n+1}$. In fact, every distance transitive action is locally $2$-transitive; see Lemma
\ref{lem:distance-trans}.

For a vertex $v$ let $\Irr_F^0(v)$ denote the set of non-trivial irreducible $F$-representations
$\pi$ of $\St_G(v)$ which occur with non-zero multiplicity $m(\pi,\theta_{v,F})$ in the permutation
representation $\theta_{v,F}$ of $\St_G(v)$ on $F[D(v)]$.  Our main result provides a precise
description of the irreducible constitutens of $\rho_{\partial,F}$.

\begin{theorem}\label{thm:decomposition}
  Let $G \leq \calG$ act locally $2$-transitively on $\calT$.
  Assume that $F$ is a field whose characteristic does not divide the Steinitz order of $\overline{G}$.

  The induced representation $\Ind_{\St_G(v)}^G(\pi)$ is irreducible for every $\pi \in \Irr_F^0(v)$.
  The decomposition of
  $\rho_{\partial,F}$ into distinct irreducible constituents is given by
  \begin{equation*}
    \rho_{\partial,F} \cong \triv_F \oplus \bigoplus_{n\in \bbN_0} \bigoplus_{\pi\in\Irr_F^0(v_n)}m(\pi,\theta_{v_n,F})\;
    \Ind_{\St_G(v_n)}^G(\pi),
  \end{equation*}
  where $v_n$ denotes an arbitrary vertex of level $n$ and $\triv_F$ is the trivial representation on $F$.
  \end{theorem}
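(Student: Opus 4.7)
My plan is to decompose $\rho_{\partial,F}$ via the level filtration $\Cinf(\partial\calT,F)=\varinjlim F[L_n]$, isolate at each step the ``new'' $G$-subrepresentation, and then verify irreducibility and pairwise non-isomorphism through a single Hom-vanishing statement that is precisely what local $2$-transitivity supplies. Spherical transitivity gives $F[L_n]\cong\Ind_{\St_G(v_n)}^G(\triv_F)$, while identifying $L_{n+1}$ with $G\times_{\St_G(v_n)}D(v_n)$ yields $F[L_{n+1}]\cong\Ind_{\St_G(v_n)}^G(\theta_{v_n,F})$. Splitting $\theta_{v_n,F}=\triv_F\oplus\theta_{v_n,F}^{0}$ into trivial and non-trivial constituents (legitimate since $\chr(F)$ avoids the Steinitz order and everything in sight is semisimple), I obtain $F[L_{n+1}]=F[L_n]\oplus W_{n+1}$ with
\[W_{n+1}=\Ind_{\St_G(v_n)}^G(\theta_{v_n,F}^{0})=\bigoplus_{\pi\in\Irr_F^{0}(v_n)}m(\pi,\theta_{v_n,F})\,\Ind_{\St_G(v_n)}^G(\pi).\]
Passing to the direct limit gives $\rho_{\partial,F}\cong\triv_F\oplus\bigoplus_{n\geq 0}W_{n+1}$; what remains is to show each $\Ind_{\St_G(v_n)}^G(\pi)$ is irreducible and that these summands are pairwise non-isomorphic.

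The crucial input from local $2$-transitivity is the following: for distinct $v,u\in L_n$, the intersection $H:=\St_G(v)\cap\St_G(u)$ acts transitively on $D(v)\times D(u)$, so $\dim(F[D(v)]\otimes F[D(u)])^{H}=1$. Expanding into $\triv\oplus F[D(v)]_{0}\oplus F[D(u)]_{0}\oplus(F[D(v)]_{0}\otimes F[D(u)]_{0})$ (where $F[D(v)]_{0}$ denotes the augmentation subrepresentation) and taking $H$-invariants, the unique $H$-fixed line must come from the $\triv$ summand, forcing $(F[D(v)]_{0})^{H}=(F[D(u)]_{0})^{H}=0$ and $(F[D(v)]_{0}\otimes F[D(u)]_{0})^{H}=0$. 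Using self-duality of permutation representations, the last equality translates into $\Hom_{H}(F[D(v)]_{0},F[D(u)]_{0})=0$. Combined with semisimplicity, this says that no irreducible $H$-constituent of $F[D(v)]_{0}|_H$ is isomorphic to one of $F[D(u)]_{0}|_H$, and neither of these restrictions has an $H$-fixed vector.

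Standard Mackey/Frobenius bookkeeping now reduces everything to this fact. For irreducibility, Mackey's double-coset decomposition of $\End_G(\Ind_{\St_G(v_n)}^G(\pi))$ involves terms $\Hom_{H_g}(\pi,\pi^{g})$ with $H_g:=\St_G(v_n)\cap g\St_G(v_n)g^{-1}$; for $g\notin\St_G(v_n)$ these vanish by the key fact with $u=gv_n$, leaving only the trivial double coset and yielding $\End_G(\Ind_{\St_G(v_n)}^G(\pi))=\End_{\St_G(v_n)}(\pi)$, a division algebra. Hence $\Ind_{\St_G(v_n)}^G(\pi)$ is irreducible. A parallel computation gives $\Hom_G(\Ind_{\St_G(v_n)}^G(\pi),F[L_n])=0$, which shows that $W_{n+1}$ contributes only new irreducible constituents, simultaneously separating summands across different levels; within a fixed level, $\Hom_G(\Ind_{\St_G(v_n)}^G(\pi),\Ind_{\St_G(v_n)}^G(\pi'))=\Hom_{\St_G(v_n)}(\pi,\pi')=0$ for distinct $\pi,\pi'$ by the same Mackey reduction together with Schur's lemma. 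The principal obstacle is the Mackey bookkeeping---in particular tracking the division algebras $\End_{\St_G(v_n)}(\pi)$ so that over non-algebraically-closed $F$ the multiplicities $m(\pi,\theta_{v_n,F})$ correspond to the correct isotypic decomposition.
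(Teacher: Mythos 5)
Your proposal is correct and follows essentially the same route as the paper: the level filtration with $F[L_{n+1}]\cong\Ind_{\St_G(v_n)}^G(\theta_{v_n,F})$ split into $\triv_F\oplus\theta^0_{v_n,F}$, plus Mackey's intertwining theorem combined with the observation that transitivity of $\St_G(u)\cap\St_G(v)$ on $D(u)\times D(v)$ forces the one-dimensionality of $F[D(u)\times D(v)]^H$ and hence the vanishing of the cross-term Homs. The only (immaterial) divergence is that you separate constituents across levels via $\Hom_G(\Ind_{\St_G(v_n)}^G(\pi),F[L_n])=0$, whereas the paper uses a dimension count $\dim_F I_{v_n}(\pi)<|L_{n+1}|\leq\dim_F I_{v_m}(\sigma)$ for $n<m$.
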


  In particular, the representation $\rho_{\partial,F}$ is multiplicity-free if and only if
  $\theta_{v,F}$ is multiplicity-free for every vertex $v$.  For $F= \bbC$ this implies the
  following local-global principle.

\begin{theorem}[Local-global principle]\label{thm:local-global} Let
  $G\leq_c \calG$ be a closed subgroup which acts locally $2$-transitively on $\calT$.  Let
  $P_\xi \leq G$ be a parabolic subgroup.  Then $(G,P_\xi)$ is a Gelfand pair if and only if
  $(\St_G(v),\St_G(u))$ is a Gelfand pair for every vertex $v$ and descendant $u\in D(v)$.
\end{theorem}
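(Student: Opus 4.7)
The plan is to deduce the equivalence from Theorem~\ref{thm:decomposition} specialised to $F = \bbC$, together with the correspondence between the unitary boundary representation $\rho^{(2)}$ and the smooth boundary representation $\rho_{\partial,\bbC}$ set up in Section~\ref{sec:unitary-vs-smooth}. The target chain of equivalences is: $(G,P_\xi)$ is a Gelfand pair $\Leftrightarrow$ $\rho^{(2)}$ is multiplicity-free $\Leftrightarrow$ $\rho_{\partial,\bbC}$ is multiplicity-free $\Leftrightarrow$ every $\theta_{v,\bbC}$ is multiplicity-free $\Leftrightarrow$ $(\St_G(v),\St_G(u))$ is a Gelfand pair for all vertices $v$ and descendants $u\in D(v)$.

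First I would set up the global side. Since $G$ is closed in the profinite group $\calG$ and acts spherically transitively, a compactness argument produces, for any $\xi,\eta\in\partial\calT$, an element $g\in G$ with $g\xi=\eta$; hence $G$ acts transitively on $\partial\calT$ with point-stabiliser $P_\xi$. Therefore $L^2(\partial\calT,\mu)\cong L^2(G/P_\xi)$ is the quasi-regular representation whose multiplicity-freeness defines the Gelfand property of $(G,P_\xi)$, and the passage to the smooth model $\rho_{\partial,\bbC}$ is provided by Section~\ref{sec:unitary-vs-smooth}. Next I would apply Theorem~\ref{thm:decomposition}: the distinct constituents $\Ind_{\St_G(v_n)}^G(\pi)$ are already known to be pairwise non-isomorphic and the trivial summand occurs with multiplicity one, so reading the formula shows that $\rho_{\partial,\bbC}$ is multiplicity-free exactly when all multiplicities $m(\pi,\theta_{v_n,\bbC})$ are equal to one, i.e.\ when every $\theta_{v,\bbC}$ is multiplicity-free.

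Finally I would translate multiplicity-freeness of $\theta_{v,\bbC}$ into the local Gelfand condition. Local $2$-transitivity forces $\St_G(v)$ to act transitively on $D(v)$: at the root this is spherical transitivity; for $v\in L_n$ with $n\ge 1$ and $|L_n|\ge 2$ one picks any $w\in L_n\setminus\{v\}$ and observes that $\St_G(v)\cap\St_G(w)$ already acts transitively on $D(v)$ by projecting a transitive action on $D(v)\times D(w)$; and if $|L_n|=1$ then $\St_G(v)=G$ is spherically transitive on $L_{n+1}=D(v)$. Hence $\theta_{v,\bbC}\cong\Ind_{\St_G(u)}^{\St_G(v)}\triv_\bbC$ for any $u\in D(v)$, and the classical Frobenius-reciprocity characterisation of Gelfand pairs identifies its multiplicity-freeness with $(\St_G(v),\St_G(u))$ being a Gelfand pair.

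The main subtlety I expect to manage carefully is the first step, namely matching the analytic Gelfand-pair condition for the profinite pair $(G,P_\xi)$ with the purely algebraic multiplicity-freeness of $\rho_{\partial,\bbC}$; once Section~\ref{sec:unitary-vs-smooth} is invoked this becomes bookkeeping, but it is the only non-formal ingredient. The remaining steps are a direct unpacking of Theorem~\ref{thm:decomposition} and of the definition of a finite Gelfand pair at each level.
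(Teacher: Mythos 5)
Your proposal is correct and follows essentially the same route as the paper: the author proves Theorems~\ref{thm:decomposition} and~\ref{thm:local-global} together, reading off from the decomposition (with the constituents $I_{v_n}(\pi)$ pairwise distinct by Lemma~\ref{lem:irreducible}) that $\rho_{\partial,\bbC}$ is multiplicity-free exactly when every $\theta_{v,\bbC}\cong\Ind_{\St_G(u)}^{\St_G(v)}(\triv_\bbC)$ is, and then translating both sides into the Gelfand-pair language via Section~\ref{sec:unitary-vs-smooth}. The only cosmetic remark is that transitivity of $\St_G(v)$ on $D(v)$ already follows from spherical transitivity alone, so your appeal to local $2$-transitivity there is unnecessary.
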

On one hand, the local-global principle can be used to construct Gelfand pairs of profinite
groups.  Indeed, if the number of descendants of every vertex is a prime or the square of a
prime, then the local pairs $(\St_G(v),\St_G(u))$ are Gelfand pairs; for details we refer to
Corollary~\ref{cor:prime-square}.  On the other hand, we use the local-global principle to construct
branch groups $G \leq_c \calG$ such that $(G,P_\xi)$ is not a Gelfand pair for every parabolic
subgroup $P_\xi$ of $G$; see Example \ref{ex:wreath-not-Gelfand}.  This answers a question of
Grigorchuk; see Problem 10.1 in~\cite{Gri2005}.

In Section \ref{sec:GGS} we study the local $2$-transitivity of \emph{GGS-groups}, named after
the famous finitely generated, infinite torsion groups of Grigorchuk \cite{Gri1980} and Gupta
and Sidki \cite{GuSi1983}.  These groups attracted considerable interest recently and the
reader is refered to the monograph \cite[Sec.~2.3]{BarGriSun2003} and the articles
\cite{FAZR2014,FAGUA2017,Pervova2007} for more information.  Here we consider (generalised)
GGS-groups which act on a $p^k$-regular rooted tree. In this case every vertex in the tree
$\calT$ has exactly $p^k$ descendants for a prime number $p$. Theorem \ref{thm:GGS-2-trans}
characterises the defining vectors with locally $2$-transitive associated GGS-groups.  It
follows from Remark \ref{rem:p-always-aperiodic} that our criterion is always satisfied if $p$
is an odd prime and $k=1$. The following result is a consequence of
Theorem~\ref{thm:GGS-2-trans}.
\begin{theorem}\label{thm:GGSsimpleversion}
  Let $p$ be an odd prime number and let $\calT$ be the $p$-regular rooted tree.
  Every GGS-group $G_{\fat{e}}\leq \Aut(\calT)$ acts locally $2$-transitively.
  Moreover, $(\overline{G}_{\fat{e}},P_\xi)$ is a Gelfand pair for
  every parabolic subgroup $P_\xi \leq_c \overline{G}_{\fat{e}}$.
\end{theorem}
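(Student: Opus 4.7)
The plan is to reduce the theorem to a short chain of earlier results already established in the paper. The first assertion should follow immediately from Theorem \ref{thm:GGS-2-trans} combined with Remark \ref{rem:p-always-aperiodic}: since $p$ is odd and the tree is $p$-regular (so $k=1$), the aperiodicity criterion is automatically satisfied, and hence every GGS-group $G_{\fat{e}}$ acts locally $2$-transitively.

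For the Gelfand pair statement I would invoke the local-global principle (Theorem \ref{thm:local-global}) applied to the closed subgroup $\overline{G}_{\fat{e}}$. This requires two observations. First, local $2$-transitivity of $G_{\fat{e}}$ transfers to the closure $\overline{G}_{\fat{e}}$: the condition is a finite combinatorial statement about orbits on the finite set $D(u)\times D(v)$, and a dense subgroup of a profinite automorphism group induces the same orbit decomposition on a finite set. Second, for every vertex $v$ and descendant $u \in D(v)$, the pair $(\St_{\overline{G}_{\fat{e}}}(v),\St_{\overline{G}_{\fat{e}}}(u))$ must be a Gelfand pair; since $|D(v)| = p$ is prime, Corollary \ref{cor:prime-square} delivers this at once. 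The local-global principle then yields that $(\overline{G}_{\fat{e}}, P_\xi)$ is a Gelfand pair for every parabolic subgroup $P_\xi$.

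The substantive work is entirely packaged into the proof of Theorem \ref{thm:GGS-2-trans}, which must be carried out separately in Section \ref{sec:GGS}; that analysis of when the natural vertex stabilisers of a GGS-group act transitively on pairs of descendants is really the only place where anything non-formal happens. Once that criterion and Remark \ref{rem:p-always-aperiodic} are in hand, the present theorem is essentially a corollary: the primality of $|D(v)| = p$ is exactly what makes the Gelfand pair conclusion so clean, and the remaining chain of implications is short and mechanical.
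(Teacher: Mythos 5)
Your proposal is correct and matches the paper's own derivation: the paper states Theorem~\ref{thm:GGSsimpleversion} as an immediate consequence of Theorem~\ref{thm:GGS-2-trans} together with Remark~\ref{rem:p-always-aperiodic} (for $k=1$ every defining vector is aperiodic and the centeredness hypothesis is vacuous for odd $p$). Your detour through Corollary~\ref{cor:prime-square} for the Gelfand-pair claim is valid but redundant, since that conclusion is already part of the statement of Theorem~\ref{thm:GGS-2-trans}, where it is obtained from the local-global principle via the fractality of $G_{\fat{e}}$.
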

 This result confirms a conjecture
mentioned by Bartholdi and Grigorchuk in the very last sentence of \cite{BarGri2001}.
We remark that the unique GGS-group which acts on the rooted binary tree does not act locally
$2$-transitively (see Remark \ref{rem:Dinfty}).  However it is still true that
$(\overline{G}_{\fat{e}},P_\xi)$ is a Gelfand pair.

\subsection{Applications to zeta functions of representations}

Let $G$ be a profinite group and consider, for every $n$, the number $r_n(G)$ of irreducible
$n$-dimensional complex continuous representations of $G$.  We assume that $G$ is
\emph{representation rigid}, this means, that the numbers $r_n(G)$ are all finite.  The
paradigm of representation growth postulates that the asymptotic behaviour of the sequence
$(r_n(G))_{n\in \bbN}$ contains information on the structure of the group $G$. In recent years
this line of investigation stimulated a considerable amount of new results; see e.g.
\cite{AvKlOnVo13, AvKlOnVo16a, Ja06, LaLu08, LuMa04} and the survey~\cite{Kl2013} as well as references therein.
Here once again groups which act on regular rooted trees provide a class of examples with remarkable properties
-- for self-similar branched groups this was discussed by  Bartholdi in \cite{Bartholdi2017}.
His results apply, in particular, to the Gupta-Sidki $p$-groups which will also be considered in Section~\ref{sec:GGS} below. The special case of iterated wreath products was studied already in~\cite{Bartholdi-delaHarpe2010}.

In many examples the numbers $r_n(G)$ grow at most polynomially in $n$ and it is natural to
investigate the asymptotic properties using the Dirichlet series
\begin{equation*}
    \zeta_G(s) = \sum_{n=1}^\infty \frac{r_n(G)}{n^s},
\end{equation*}
which is called the \emph{representation zeta function}.
For certain self-similar branch groups Bartholdi's method \cite{Bartholdi2017} allows
the efficient computation of the coefficients $r_n(G)$ using a functional equation of $\zeta_G$.
However, it is worth noting, that in general explicit
formulae for $\zeta_G$ are quite difficult to obtain; see \cite{AvKlOnVo13,
  AvKlOnVo16a} for an extensive treatment of compact $p$-adic Lie groups of type
$A_2$ (such as $\SL_3(\bbZ_p)$).

\medskip

Facing this difficulty it might be fruitful to break the problem into smaller pieces by
counting only certain subsets of all representations.
In the recent preprint \cite{KiKl} B.~Klopsch and the author developed such a
more flexible approach to representation zeta functions which is based on the following observation.
The representation zeta function at $s-1$ is

\begin{equation*}
    \zeta_G(s-1) = \sum_{n=1}^\infty \frac{r_n(G)n}{n^s} = \sum_{\pi \in \Irr_\bbC(G)} \frac{\dim(V_\pi)}{\dim(V_\pi)^s}
  \end{equation*}
  and the dimension $\dim_\bbC (V_\pi)$ of an irreducible complex representation $(\pi,V_\pi)$
  of $G$ is equal to the multiplicity of $\pi$ in the regular representation of $G$ on the
  space of locally constant functions $\Cinf(G,\bbC)$. Equivalently, it is the multiplicity in
  the regular representation of $G$ on $L^2(G)$ with respect to the Haar measure; see
  Section~\ref{sec:unitary-vs-smooth}.  Let $(\rho,V_\rho)$ be an admissible smooth
  representation of $G$; see Section~\ref{sec:rep-profinite} for the definition of smooth
  representations. Such a representation decomposes into a direct sum
\begin{equation*}
   V_\rho = \bigoplus_{\pi\in\Irr_{\bbC}(G)} m(\pi,\rho) V_\pi 
 \end{equation*}
 over the finite dimensional irreducible complex representations of $G$
 which occur with finite multiplicities $m(\pi,\rho) \in \bbN_0$.
 Under suitable assumptions it is natural to investigate
 the Dirichlet series
 \begin{equation*}
    \zeta_\rho(s) =  \sum_{\pi \in \Irr(G)} \frac{m(\pi,\rho)}{\dim_\bbC( V_\pi)^s},
  \end{equation*}
  which will be called the zeta function of the representation $(\rho, V_\rho)$;
  for the details we refer to \cite{KiKl}.
  As observed above, the zeta function of the regular representation equals $\zeta_G(s-1)$.
  Here the results of \cite{KiKl} will not be used. It is sufficient to know that
  this setting provides a wealth of examples, where explicit formulae for zeta functions of representations
  can actually be determined; see \cite[Sect.~7]{KiKl}.

  \medskip

  Theorem \ref{thm:decomposition} provides a new way to
  obtain explicit formulae for zeta functions of representations.
  We return to the situation where
  $G \leq_c \calG$ is a closed locally $2$-transitive subgroup of the automorphism group
  of a spherically symmetric rooted tree $\calT$.
  Theorem \ref{thm:decomposition}
  yields a formula for the zeta function of the representation $\rho_{\partial,\bbC}$ on the boundary
  \begin{equation}\label{eq:zeta-boundary-formula}
    \zeta_{\rho_{\partial,\bbC}}(s) =
    1 + \sum_{n\in \bbN_0} \sum_{\pi \in \Irr^0_\bbC(v_n)} \frac{m(\pi,\theta_{v_n,\bbC})}{|L_n|^s \dim(V_\pi)^s}.
  \end{equation}
  Since $\partial \calT \cong G/P$ for a parabolic subgroup $P$, the representation
  $\rho_{\partial,\bbC}$ is isomorphic to the smoothly induced representation
  $\smInd_P^G(\triv_\bbC)$; see Section~\ref{sec:rep-profinite} for the definition of smooth
  induction.

  In certain cases, this makes it possible to use formula \eqref{eq:zeta-boundary-formula} to
  determine the zeta function of a smoothly induced representation.  Let $G$ be any profinite
  group with a closed subgroup $P \leq_c G$.  If one can realise the homogeneous space $G/P$
  as the boundary of a spherically symmetric rooted tree $\calT$ with a locally $2$-transitive
  action of $G$, then formula \eqref{eq:zeta-boundary-formula} describes the zeta function of
  $\smInd_P^G(\triv_\bbC)$.  We give an example to illustrate the usefulness of this approach in
  Section \ref{sec:repr-zeta-functions}.

  \section{Preliminaries from representation theory}
  Here we review some preliminary results from representation theory.
  Let $F$ be a field.
  Recall that for an abstract group $H$, a subgroup $S\leq H$ and an $F$-representation $(\pi,V_\pi)$ of $S$,
  the representation \emph{induced} from $\pi$ is the representation of $H$ on the space
 \begin{equation*}
  \Ind_S^H(\pi) = \{ f \colon H \to V_\pi \mid f(s x) =  \pi(s) f(x) \text{ for all } s \in S \}.
  \end{equation*}
  by the right regular action, i.e.\ $\rho(h)(f)(x) := f(xh)$ for all $f \in \Ind_S^H(\pi)$
  and all $x,h \in H$. We remark that sometimes
  this is also called the coinduced representation.

 \subsection{Representation theory of finite groups}
  In this short subsection $H$ denotes a finite group and
 $F$ denotes a field whose characteristic \emph{does not} divide the order of $H$. Under these
assumptions Maschke's theorem implies that the group algebra $F[H]$ is a semisimple $F$-algebra (see
e.g.\ \cite[XVIII Thm.~1.2]{LangAlgebra}). In particular, every representation of $H$ on an $F$-vector space
decomposes as a direct sum of finite dimensional irreducible representations.

Moreover, we recall that every action of $H$ on a finite set $A$ gives rise to a representation
$\rho_{A,F}$ of $H$ on the vector space
\begin{equation*}
    F[A] = \Bigl\{\sum_{a \in A} c_a a \mid c_a \in F \Bigr\}
  \end{equation*}
  of formal $F$-linear combinations of the elements in $A$ by imposing $\rho_{A,F}(g)(a) = ga$.
 We collect two preliminary results from
the representation theory of finite groups.
\begin{lemma}\label{lem:diagonal-transitive}
  Assume that $H$ acts on finite sets $A$ and $B$.  If the
  diagonal action of $H$ on $A\times B$ is transitive, then
 \begin{equation*}
   \dim_F \Hom_{F[H]}(F[A],F[B]) = 1.
 \end{equation*}
 In particular, the trivial representation is the only irreducible
 $F$-representation of $H$ which occurs in both $F[A]$ and $F[B]$.
\end{lemma}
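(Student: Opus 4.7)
The plan is to reduce everything to an orbit count. The key observation is that an $F[H]$-linear map $\phi \colon F[A]\to F[B]$ is determined by its "matrix" coefficients $m(a,b) \in F$ defined by $\phi(a)=\sum_{b\in B}m(a,b)\,b$. Unwinding the condition $\phi(ha)=h\phi(a)$ yields $m(ha,hb)=m(a,b)$ for all $h\in H$, so $m$ is exactly a function on $A\times B$ that is constant on $H$-orbits under the diagonal action. Consequently,
\begin{equation*}
  \dim_F \Hom_{F[H]}(F[A],F[B]) \;=\; \#\bigl((A\times B)/H\bigr).
\end{equation*}
Under the hypothesis of diagonal transitivity this count is $1$, which proves the first assertion.

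For the second assertion I would combine the dimension computation with Maschke's theorem. Since $\chr(F)$ does not divide $|H|$, the representations $F[A]$ and $F[B]$ decompose as $F[A]=\bigoplus_{i} V_i^{m_i}$ and $F[B]=\bigoplus_{i} V_i^{n_i}$ over a set of pairwise non-isomorphic irreducible $F$-representations $V_i$. Schur's lemma gives
\begin{equation*}
  1 \;=\; \dim_F \Hom_{F[H]}(F[A],F[B]) \;=\; \sum_{i} m_i n_i \dim_F \End_{F[H]}(V_i).
\end{equation*}

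Since the projection $A\times B\to A$ is surjective and $H$-equivariant, transitivity on $A\times B$ forces transitivity on $A$, and likewise on $B$. Hence the trivial representation $\triv_F$ occurs in both $F[A]$ and $F[B]$ with multiplicity exactly one (spanned by the orbit sum), and $\End_{F[H]}(\triv_F)=F$ contributes $1$ to the above sum. Because the total equals $1$, every other summand must vanish, forcing $m_i n_i=0$ for every non-trivial irreducible $V_i$. Thus $\triv_F$ is the unique irreducible representation that appears in both $F[A]$ and $F[B]$.

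The only mildly delicate point is the translation of $F[H]$-linearity into the orbit condition $m(ha,hb)=m(a,b)$; once this dictionary is in place, the remainder is formal bookkeeping with the semisimple decomposition, and no further structural input is needed.
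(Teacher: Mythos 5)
Your proof is correct and takes essentially the same route as the paper: identifying $\Hom_{F[H]}(F[A],F[B])$ with the $H$-invariant functions on $A\times B$ (the paper phrases this as $\Hom_F(F[A],F[B])\cong F[A\times B]$ followed by taking invariants, while you write out the matrix coefficients directly). Your explicit treatment of the second assertion via Maschke and Schur is a fine elaboration of what the paper leaves implicit.
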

\begin{proof}
  The space of $F$-linear maps $\Hom_F(F[A],F[B])$ is equipped with an $H$-action via
  $(h\cdot\varphi)(a) := h\varphi(h^{-1}a)$ for all $a \in A$.
  We note that the $H$-invariants in this space are exactly the $H$-equivariant linear maps.
  Since $A$ is finite, there is an isomorphism of $F[H]$-modules
  \begin{equation*}
    \Hom_F(F[A],F[B]) \cong F[A\times B]
  \end{equation*}
  We conclude that
  \begin{equation*}
    \Hom_{F[H]}(F[A],F[B]) \cong   \Hom_F(F[A],F[B])^H \cong F[A\times B]^H
  \end{equation*}
  where $F[A\times B]^H$ denotes the space of $H$-invariants.
  Since $H$ acts transitively on $A\times B$, the space of $H$-invariants in $F[A\times B]$ is
  one-dimensional.
\end{proof}

\begin{lemma} \label{lem:irreducibilityCriterion} Let $S \leq H$ be a subgroup.
  Given an irreducible $F$-representation $(\pi,V_\pi)$ of $S$ so that
  $$\dim_F \End_{F[S]}(\pi) = \dim_F \End_{F[H]}(\Ind_S^{H}(\pi)),$$
  then the induced representation $\Ind_S^{H}(\pi)$ is irreducible.
\end{lemma}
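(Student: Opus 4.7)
The plan is to combine Frobenius reciprocity with the semisimplicity in force throughout this subsection: since the characteristic of $F$ does not divide $|H|$, every $F$-representation of $H$ and of $S$ decomposes as a direct sum of irreducibles, and Schur's lemma ensures that $D := \End_{F[S]}(\pi)$ is a (finite-dimensional) division algebra over $F$. Writing $V := \Ind_S^H(\pi)$, the aim is to show that $V$ is irreducible.

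The first step is to apply Frobenius reciprocity, which is valid for the paper's definition of induction since $H/S$ is finite and so induction and coinduction coincide, to obtain
\begin{equation*}
\End_{F[H]}(V) \cong \Hom_{F[S]}(V|_S, \pi).
\end{equation*}
Decomposing $V|_S$ into its isotypic $F[S]$-components and isolating the $\pi$-component yields $\dim_F \End_{F[H]}(V) = a \cdot \dim_F D$, where $a$ denotes the multiplicity of $\pi$ in $V|_S$. The hypothesis therefore forces $a = 1$: the representation $\pi$ occurs exactly once in $V|_S$.

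To conclude, I would decompose $V \cong \bigoplus_i m_i \sigma_i$ into its distinct irreducible $H$-constituents $\sigma_i$ with multiplicities $m_i \geq 1$, and let $n_i$ denote the multiplicity of $\pi$ in $\sigma_i|_S$. A second application of Frobenius reciprocity, $\Hom_{F[H]}(\sigma_i, V) \cong \Hom_{F[S]}(\sigma_i|_S, \pi)$, yields a non-zero right-hand side whenever $m_i \geq 1$, so $n_i \geq 1$ for every constituent $\sigma_i$ of $V$. Counting the multiplicity of $\pi$ in $V|_S = \bigoplus_i m_i (\sigma_i|_S)$ then gives $\sum_i m_i n_i = a = 1$; since the $m_i$ and $n_i$ are positive integers, there is a unique index $i$ and both $m_i$ and $n_i$ equal one. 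Hence $V$ is irreducible. The only subtle point, and the place to be a little careful, is matching the direction of Frobenius reciprocity to the particular form of $\Ind_S^H$ used in the paper, but since $H$ is finite both versions coincide and the remainder is routine bookkeeping.
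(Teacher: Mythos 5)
Your proof is correct, but it follows a different route from the paper's. You apply Frobenius reciprocity twice (once to compute $\dim_F\End_{F[H]}(V)=a\cdot\dim_F\End_{F[S]}(\pi)$, where $a$ is the multiplicity of $\pi$ in $V|_S$, forcing $a=1$; and once to see that every irreducible constituent $\sigma_i$ of $V$ contains $\pi$ upon restriction) and then conclude from the count $\sum_i m_i n_i=1$ that $V$ has a single constituent with multiplicity one. The paper instead works entirely on the endomorphism-algebra side: the induction functor gives an $F$-algebra homomorphism $J\colon\End_{F[S]}(\pi)\to\End_{F[H]}(\Ind_S^H(\pi))$, which is injective because the source is a division algebra, hence an isomorphism by the dimension hypothesis; so the target is a division algebra, has no non-trivial idempotents, and by semisimplicity (Maschke) the induced module must be irreducible. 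Both arguments rest on Schur's lemma and Maschke's theorem, and both correctly track the division algebra $\End_{F[S]}(\pi)$, which matters since $F$ is not assumed algebraically closed. Your version is more hands-on and additionally extracts the information that $\pi$ occurs exactly once in $\Ind_S^H(\pi)|_S$; the paper's version is shorter and dovetails with how the lemma is used later (Lemma \ref{lem:irreducible} computes $\dim_F\End_{F[H]}(\Ind_S^H(\pi))$ directly via Mackey's intertwining number theorem, so the idempotent argument plugs in immediately). Your remark that induction and coinduction coincide for the finite-index situation, so that both directions of Frobenius reciprocity are available for the paper's (coinduced) model of $\Ind_S^H$, is exactly the right point to flag; in fact both of your applications are instances of the coinduction adjunction $\Hom_H(W,\Ind_S^H\pi)\cong\Hom_S(W|_S,\pi)$, so no translation is even needed.
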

\begin{proof}
  Since $\pi$ is irreducible, the endomorphism algebra $E = \End_{F[S]}(\pi)$ is a division
  $F$-algebra.  The induction functor provides a homomorphism of
  $F$-algebras $J \colon E \to \End_{F[H]}(\Ind_S^{H}(\pi))$.  Since $E$ is
  a division algebra the homomorphism $J$ is injective. The assumption on the dimensions
  implies that $J$ is actually an isomorphism.  In particular,
  $ \End_{F[H]}(\Ind_S^{H}(\pi))$ is a division algebra and, hence, does not
  contain non-trivial idempotent elements.
  By Maschke's theorem the representation $ \End_{F[H]}(\Ind_S^{H}(\pi))$
  is semi-simple, i.e.\ every subrepresentation is a direct summand.
  Therefore the non-existence of idempotent elements
  implies that $\Ind_S^{H}(\pi)$ is irreducible.
\end{proof}

\subsection{Representation theory of profinite groups}\label{sec:rep-profinite}
Let $H$ denote a profinite group and let $F$ be a field.  A representation $(\rho, W_\rho)$ of
$H$ on some (possibly infinite dimensional) $F$-vector space $W_\rho$ is called \emph{smooth}
if every $w \in W_\rho$ has an open stabilizer in $H$.  This means, that $W_\rho$ is the union
of finite dimensional subrepresentations on which the action of $H$ factors through some
finite continuous quotient of $H$.  In particular, a finite dimensional representation is
smooth exactly if it factors over some finite continuous quotient of $H$.  We will impose a
condition on the characteristic of $F$, which ensures that smooth representations decompose
into direct sums of finite dimensional irreducible representations.
 \begin{definition}
   We say that the characteristic of $F$ \emph{does not divide the Steinitz order of $H$},
   if $\chr(F)$ does not divide the order of any finite continuous quotient of $H$.
 \end{definition}
 This definition alludes to the Steinitz order of a profinite group in terms of Steinitz numbers.
 We refer to \cite[Chapter~2]{WilsonProfinite} for the definition of
 the Steinitz order of a profinite group.
 Here this connection will not be important.

 Assume now that $\chr(F)$ does not divide the Steinitz order of $H$.  The key
 observation is that, since smooth representations of $H$ are unions of representations of
 finite quotients, Maschke's theorem implies that every smooth representation $(\rho,W_\rho)$ of $H$ over $F$
 is a direct sum of isotypic components, i.e.
 \begin{equation*}
   W_\rho = \bigoplus_{\pi \in \Irr_F(H)} W_\rho(\pi)
 \end{equation*}
 where the sum runs over all smooth irreducible $F$-representations of $H$.  The isotypic
 component $W_\rho(\pi)$ isomorphic to a direct sum of copies of $\pi$: the number
 $m(\pi,\rho)$ of copies is called the \emph{multiplicity} of $\pi$ in $\rho$.
 The representation $\rho$ is said to be \emph{admissible},
 if all multiplicities are finite;
 in other words, all isotypic components are finite dimensional.

 \medskip
 
 For convenience we recall the concept of smooth induction.
 Let $S \leq_c H$ be a closed subgroup and
let $(\pi, V_\pi)$ be a smooth representation of $S$.
The representation smoothly induced from $\pi$ is a representation of $H$
on the space
\begin{equation*}
  \smInd_S^H(\pi) = \{ f \colon H \to V_\pi \mid f \text{ loc.\ constant and } f(s x) =  \pi(s) f(x) \text{ for all } s \in S \}.
\end{equation*}
The representation of $H$ is given by the right regular action, i.e.\ $\rho(h)(f)(x) := f(xh)$
for all $f \in \smInd_S^H(\pi)$ and all $x,h \in H$.  In particular, if $\pi$ is the trivial
representation on a one dimensional $F$-vector space, then $\smInd_S^H(\pi)$ is the space
$\Cinf(H/S,F)$ of locally constant $F$-valued functions on the homogeneous space $H/S$.
The representation of $H$ on $\Cinf(H/S,F)$ is always admissible.

\subsection{Unitary versus smooth representations}\label{sec:unitary-vs-smooth}
Let $\overline{G}$ be a profinite group and let $G \leq \overline{G}$ be a dense subgroup.  We
consider the homogeneous space $X = \overline{G}/P$ for a closed subgroup
$P \leq_c \overline{G}$.  The pushforward measure $\mu$ of the normalized Haar measure on
$\overline{G}$ is a regular $\overline{G}$-invariant Borel probability measure on $X$.
We consider the representation of $\overline{G}$ on the complex Hilbert space $L^2(X,\mu)$ defined
by $(\rho(g)h)(x)= h(g^{-1}x)$ for all $h \in L^2(X,\mu)$ and $g \in \overline{G}$. 
In this
situation, the properties of the representation $\rho|_G$ of $G$ on
$L^2(X,\mu)$ are essentially determined by the representation of $\overline{G}$ (or $G$) on the space
$\Cinf(X,\bbC)$ of locally constant $\bbC$-valued functions on $X$.
This relation seems to be
well-known, however, we could not find a good reference. We include a short discussion,
since this method can be used to interpret our results in the framework of unitary representations.

\begin{proposition} In the situation above the following statements hold.
  \begin{enumerate}
  \item\label{it:dense} The subspace $\Cinf(X,\bbC)$ is dense in $L^2(X,\mu)$.
  \item\label{it:same-irreducibles} Every closed irreducible $G$-subrepresentation of $L^2(X,\mu)$
    is $\overline{G}$-stable and contained in $\Cinf(X,\bbC)$.
  \item\label{it:isotypic-decomposition} Let $\Cinf(X,\bbC) = \bigoplus_{\vartheta \in \Irr(\overline{G})} V(\vartheta)$
    be the decomposition of the $\overline{G}$-representation on $\Cinf(X,\bbC)$ into isotypic components.
    The isotypic components $V(\vartheta)$ are finite dimensional and pairwise orthogonal in $L^2(X,\mu)$.
    In particular, 
    $L^2(X,\mu)$  is isomorphic to the Hilbert
    space direct sum $\widehat{\bigoplus}_{\vartheta \in \Irr(\overline{G})} V(\vartheta)$.
  \end{enumerate}
\end{proposition}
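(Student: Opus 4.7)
The plan is to prove the three statements in order using three basic facts about the setup: $X=\overline{G}/P$ is a profinite space, the unitary representation $\rho$ of $\overline{G}$ on $L^2(X,\mu)$ is strongly continuous, and every finite-dimensional continuous unitary representation of the compact group $\overline{G}$ factors through a finite quotient (its image in a unitary group is both profinite and a compact Lie subgroup, hence finite).

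For (\ref{it:dense}), I would use that $X$ is a compact Hausdorff totally disconnected space, and that the open normal subgroups of $\overline{G}$ give rise to a cofinal family of finite clopen partitions of $X$. Any $L^2$-function can be approximated in $L^2$-norm by a continuous function, and a continuous function on a profinite space is uniformly approximated by a locally constant one (Stone--Weierstrass on $C(X,\bbC)$, or a direct argument on clopen partitions). Hence $\Cinf(X,\bbC)$ is dense in $L^2(X,\mu)$.

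For (\ref{it:same-irreducibles}), let $W \leq L^2(X,\mu)$ be a closed irreducible $G$-subrepresentation. Given $\bar g\in\overline{G}$ and a net $(g_\alpha)$ in $G$ converging to $\bar g$, strong continuity yields $\rho(g_\alpha)v \to \rho(\bar g)v$ in $L^2$ for every $v\in W$. Each $\rho(g_\alpha)v$ lies in $W$ and $W$ is closed, so $\rho(\bar g)v\in W$; thus $W$ is $\overline{G}$-invariant, and the same density argument forces $G$- and $\overline{G}$-invariance for closed subspaces to coincide, so $W$ is $\overline{G}$-irreducible. By the Peter--Weyl theorem $\dim W<\infty$, and the representation of $\overline{G}$ on $W$ factors through an open normal subgroup $N$ by the observation above. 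Since $N$ has finite index, it acts with only finitely many orbits on $X$, and each orbit is clopen (the quotient map $\overline{G}\to X$ is open and $N$ is open). Any $N$-invariant element of $L^2(X,\mu)$ is therefore almost everywhere constant on finitely many clopen sets, hence represented by a locally constant function, proving $W\subseteq \Cinf(X,\bbC)$.

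For (\ref{it:isotypic-decomposition}), the finite-dimensionality of $V(\vartheta)$ follows from Frobenius reciprocity: under the identification $\Cinf(X,\bbC)\cong\smInd_P^{\overline{G}}(\triv_\bbC)$, the multiplicity of $\vartheta$ equals $\dim_\bbC V_\vartheta^{P}\leq\dim_\bbC V_\vartheta<\infty$, so $V(\vartheta)$ is a finite sum of copies of $V_\vartheta$. Pairwise orthogonality in $L^2(X,\mu)$ is the standard fact that the projection onto an isotypic component of a unitary representation of a compact group is self-adjoint, being realised by the Peter--Weyl averaging operator $\dim(\vartheta)\int_{\overline{G}}\overline{\chi_\vartheta(g)}\,\rho(g)\,dg$. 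Combining this orthogonality with (\ref{it:dense}) and (\ref{it:same-irreducibles}) shows that $\widehat{\bigoplus}_\vartheta V(\vartheta)$ is closed in $L^2(X,\mu)$, contains the dense subspace $\Cinf(X,\bbC)$, and therefore equals $L^2(X,\mu)$. The main obstacle I anticipate is the passage in (\ref{it:same-irreducibles}) from almost-everywhere $N$-invariance of an $L^2$-function to genuine local constancy, which hinges on the openness of $\overline{G}\to X$ so that the finitely many $N$-orbits are clopen; everything else reduces to routine Peter--Weyl theory for the compact group $\overline{G}$.
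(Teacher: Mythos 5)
Your proof is correct, and for parts (\ref{it:same-irreducibles}) and (\ref{it:isotypic-decomposition}) it follows essentially the same route as the paper: strong continuity gives $\overline{G}$-stability, Peter--Weyl gives finite dimensionality, the no-small-subgroups property of $U(n)$ makes the representation factor through a finite quotient, and orthogonality of isotypic components is the standard self-adjointness of the averaging projector. The one place where you genuinely diverge is the density statement (\ref{it:dense}): the paper runs an approximate-identity argument, convolving $h\in L^2(X,\mu)$ against a locally constant bump $f$ supported in a small open normal subgroup $K$ and observing that $T_f(h)$ is locally constant while $\Vert T_f(h)-h\Vert_2<\varepsilon$; you instead invoke regularity of $\mu$ to get density of $C(X)$ in $L^2$ and then Stone--Weierstrass (or clopen partitions) to approximate continuous functions uniformly by locally constant ones. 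Both are valid; the paper's convolution argument is self-contained within the group-theoretic setup and reuses the continuity of $\overline{G}\times L^2\to L^2$ already needed for (\ref{it:same-irreducibles}), while yours is more elementary measure theory but leans on regularity of the pushforward measure. You also spell out a step the paper leaves implicit, namely why an $L^2$-class fixed by an open normal subgroup $N$ has a locally constant representative (finitely many clopen $N$-orbits plus the averaging/Fubini argument); that is exactly the right justification and worth keeping.
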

\begin{proof}
  First we consider \eqref{it:dense}. Every $f\in \Cinf(G,\bbC)$ provides a
  convolution operator $T_f \in \calB(L^2(X,\mu))$ given by
  $T_f(h)(x) = \int_{\overline{G}} f(g)h(gx) dg$ with respect to the normalized Haar measure on
  $\overline{G}$.  Since $f$ is locally constant, the function $T_f(h)$ is locally constant
  for every $h \in L^2(X,\mu)$.  Let $\varepsilon > 0$ and $h\in L^2(X,\mu)$ be given.
  The multiplication map $\overline{G} \times L^2(X,\mu) \to L^2(X,\mu)$ is
  continuous, hence there is
  an open normal subgroup $K \trianglelefteq_o \overline{G}$ such that $\Vert h-\rho(k)h \Vert_2 < \varepsilon$
  for all $k \in K$.
  Let $f\in \Cinf(G,\bbR)$ be a positive function supported in $K$ with total mass one, then
  \begin{equation*}
  \Vert T_f(h) - h \Vert_2  \leq \int_{\overline{G}} f(k) \Vert h- \rho(k^{-1})h \Vert_2 dk < \varepsilon
\end{equation*}
and we conclude that $\Cinf(X,\bbC)$ is dense in $L^2(X,\mu)$.

Let $V \subseteq L^2(X,\mu)$ be a closed irreducible $G$-subrepresentation. It follows from
the continuity of the map $\overline{G} \times L^2(X,\mu) \to L^2(X,\mu)$, that $V$
is also $\overline{G}$-stable. By the Peter-Weyl theorem every irreducible
unitary representation of the compact group $\overline{G}$ is finite dimensional; see \cite[Corollary 5.4.2]{Kowalski}. Moreover,
$\overline{G}$ is profinite and the no-small-subgroup argument shows that the representation of $\overline{G}$ on $V$
factors over some finite continuous quotient. We deduce that $V$ lies in $\Cinf(X,\bbC)$.

Finally, let  $\Cinf(X,\bbC) = \bigoplus_{\vartheta \in \Irr(\overline{G})} V(\vartheta)$
be the decomposition into isotypic components as in \eqref{it:isotypic-decomposition}.
Since the representation is admissible, the isotypic components are finite dimensional.
For the fact that distinct isotypic components are orthogonal we refer to \cite[Lemma 3.4.21]{Kowalski}.
\end{proof}

 \section{Locally $2$-transitive actions on rooted trees }

\subsection{Notation}
Let $(X_i)_{i\in \bbN}$ be a sequence of finite sets. Throughout, $m_i = |X_i|$ denotes the
cardinality of $X_i$ and we assume $m_i \geq 2$ for all $i\in \bbN$.  For every $n\in \bbN_0$ we
define
\begin{equation*}
   L_n :=  \prod_{i=1}^n X_i,
\end{equation*}
this is the set of finite sequences $(x_1,\dots,x_n)$ with $x_i \in X_i$. Here we use the
convention that $L_0$ consists of exactly one element: the empty sequence $\epsilon$.  The
\emph{spherically symmetric rooted tree} $\calT = (V(\calT),E(\calT))$ (based on
$(X_i)_{i\in\bbN}$) is a graph on the set of vertices $V(\calT) = \bigsqcup_{n=0}^\infty L_n$
and there is an edge between $u\in L_n$ and $v\in L_{n+1}$ exactly if $u$ is a prefix of $v$.
The distinguished element $\epsilon \in L_0$ is called the root of $\calT$.  In addition, if
$m_i = m$ for all $i$, then we say that $\calT$ is the \emph{$m$-regular} rooted tree.  The set $L_n$
is called the $n$-th \emph{level} of $\calT$.  For a vertex $u\in L_n$ the set of
\emph{descendants} $D(u)$ consists of all vertices in $L_{n+1}$ which are incident to $u$.  If
$u = (x_1,\dots,x_n)$ and $y \in X_{n+1}$, then we write $uy$ for the descendant
$(x_1,\dots,x_n,y)$ of $u$.

The group of root-preserving automorphisms $\calG = \Aut(\calT)$ of $\calT$ is a profinite
group. A neighbourhood base of open normal subgroups is given by the pointwise stabilisers
$\calG(n)$ of the $n$-th level, i.e.,
\begin{equation*}
  \calG(n) = \{g\in \Aut(\calT) \mid \forall u \in L_n \quad gu = u\}.
\end{equation*}
A subgroup $G \leq \calG$ is said to act \emph{spherically transitive} on $\calT$, if $G$ acts
transitively on the level $L_n$ for every $n\in \bbN$.  Clearly, every root-preserving automorphism also
preserves the levels setwise.  For $g \in \calG$ and a vertex $u \in L_n$, we define the
\emph{label} of $g$ at $u$ to be the unique permutation $g_{(u)} \in \Aut(X_{n+1})$ such that
$g(ux) = g(u)g_{(u)}(x)$ for all $x \in X_{n+1}$.

The \emph{boundary} $\partial\calT$ of $\calT$ is the profinite set
\begin{equation*}
  \partial\calT := \prod_{i\in\bbN} X_i,
\end{equation*}
which can be considered as the set of infinite, non-backtracking paths in $\calT$ starting at
the root.  The automorphism group $\calG$ acts continuously on $\partial\calT$.  We observe
that a subgroup $G\leq \calG$ acts spherically transitively on $\calT$ if and only if its
closure $\overline{G} \leq_c \calG$ acts transitively on $\partial\calT$.  The stabiliser of
$u\in L_n$ in $G$ will be denoted by $\St_G(u)$.  The stabiliser $P_\xi$ of a point
$\xi \in \partial\calT$ in $\overline{G}$ is called a \emph{parabolic subgroup} of
$\overline{G}$.  All parabolic subgroups are conjugated in $\overline{G}$, whenever the action
of $G$ is spherically transitive.

\subsection{Local $2$-transitivity: Definition and examples}

\begin{definition}
  A subgroup $G \leq \calG$ acts \emph{locally $2$-transitively} on $\calT$, if $G$ acts spherically
  transitive and for all $n\in \bbN$ and every pair $u,v\in L_n$ of distinct vertices, the group
  $\St_G(u) \cap \St_G(v)$ acts transitively on $D(u) \times D(v)$ via the diagonal action.
\end{definition}
\begin{remark}
  It is worth noting, that a group $G$ acts locally $2$-transitively
  if and only if its closure $\overline{G}$ does.
\end{remark}

We give two examples of actions that are locally $2$-transitive. First, we discuss iterated
wreath products.  For every $i \in \bbN$, let $G_i \subseteq \Aut(X_i)$ be a transitive
subgroup. We denote the collection $(G_i \mid i \in \bbN)$ of these subgroups by $C$. The
iterated wreath product defined by $C$ is the closed subgroup $G_C$ of $\calG$ which consists
of all elements which have only labels in $C$, i.e.
\begin{equation*}
     G_C = \{g \in \calG \mid \forall n \in \bbN_0 \forall u \in L_n\;  g_{(u)}\in G_{n+1}\}.
  \end{equation*}
 
  \begin{lemma}\label{lem:wreath-locally2}
    Let $C$ be a collection of transitive subgroups. The action of the iterated wreath product
    $G_C$ on $\calT$ is locally $2$-transitive.
  \end{lemma}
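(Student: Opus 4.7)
The plan is to verify both requirements of local $2$-transitivity for $G_C$: spherical transitivity on $\calT$, and transitivity of stabiliser intersections on product sets of descendants. Spherical transitivity follows by a standard inductive argument. Given two vertices at level $n$, one constructs an element of $G_C$ sending one to the other by choosing, at each prefix of the source path, a label in the appropriate $G_i$ realising the required coordinate change (possible by transitivity of $G_i$ on $X_i$) and trivial labels elsewhere; the resulting automorphism lies in $G_C$ by definition.

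For the $2$-transitivity condition, fix distinct vertices $u, v \in L_n$ together with pairs $(uy, vz), (uy', vz') \in D(u) \times D(v)$. The key idea is to \emph{localise the branching}: let $k$ with $0 \leq k < n$ be the largest index such that $u$ and $v$ share their first $k$ coordinates, and let $w \in L_k$ denote this common prefix. Then $u$ and $v$ have distinct $(k+1)$-th coordinates, so the paths from $w$ down to $u$ and from $w$ down to $v$ lie in disjoint subtrees rooted at two distinct descendants of $w$. This decoupling is what allows independent control over the two branches.

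I would then construct the desired $g \in G_C$ by specifying its labels. At each shared prefix of $u$ and $v$ up to and including $w$, choose the identity label; this fixes the common coordinates along the way and, crucially, simultaneously fixes the two distinct branching coordinates $u_{k+1}$ and $v_{k+1}$ at $w$. On the path strictly below $w$ toward $u$, place identity labels at the intermediate vertices, and at $u$ itself pick $\sigma \in G_{n+1}$ with $\sigma(y) = y'$, which exists because $G_{n+1}$ is transitive on $X_{n+1}$. Do the symmetric thing for $v$, choosing a label in $G_{n+1}$ that sends $z$ to $z'$, and assign the identity at every remaining vertex of $\calT$. The resulting $g$ lies in $G_C$ by construction, stabilises both $u$ and $v$, and satisfies $g(uy) = uy'$ together with $g(vz) = vz'$.

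The only genuine subtlety, which evaporates the moment $k$ is identified, is the compatibility of the constraints along the shared initial portion of the two paths: one must check that the identity label at $w$ can simultaneously fix two \emph{different} coordinates, which is trivially the case. Beyond this observation, the argument is a purely combinatorial bookkeeping exercise, and no transitivity beyond the single-point transitivity of each $G_i$ on $X_i$ is required.
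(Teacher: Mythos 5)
Your proof is correct and takes essentially the same approach as the paper: the paper simply observes that $\St_{G_C}(u)\cap\St_{G_C}(v)$ acts on $D(u)\times D(v)$ like $G_{n+1}\times G_{n+1}$, because the labels at the distinct vertices $u$ and $v$ can be prescribed independently, and your explicit construction (identity labels everywhere except at $u$ and $v$) is just the verification of that surjectivity. The discussion of the branching vertex $w$ is harmless but unnecessary, as you yourself note.
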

  \begin{proof}
    It follows immediately from an inductive argument and the fact that every
    $G_i \subseteq \Aut(X_i)$ is transitive, that the action of $G_C$ is spherically
    transitive.  Now, let $u,v \in L_n$ be distinct elements of the $n$-th level. The group
    $\St_G(u)\cap \St_G(v)$ acts on $D(u) \times D(v)$ like $G_{n+1}\times G_{n+1}$, thus the
    action is transitive.
  \end{proof}

  Second, we argue that the notion of locally $2$-transitive actions weakens
  the notion of distance transitive actions.  The shortest-path distance on $\calT$ induces a
  metric $d$ on every level $L_n$. A group $G\leq \calG$ is said to act \emph{distance
    transitively} on $\calT$, if for all $n \in \bbN$ and all $u_1,u_2,v_1,v_2 \in L_n$ with
  $d(u_1,u_2) = d(v_1,v_2)$, there is some $g\in G$ so that $gu_i = v_i$ for all
  $i\in\{1,2\}$.
\begin{lemma}\label{lem:distance-trans}
  Let $G\leq \calG$. If $G$ acts distance transitively on $\calT$, then the action of $G$ is locally
  $2$-transitive.
\end{lemma}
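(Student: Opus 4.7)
The plan is to derive local $2$-transitivity directly from distance transitivity by a simple distance computation in the tree, combined with the fact that a tree automorphism preserves the parent relation. I will fix a level and a pair of distinct vertices, convert the required transitivity on descendant pairs into a distance-transitivity statement one level deeper, and then observe that the resulting automorphism automatically fixes the chosen vertices.

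First I would check spherical transitivity: given $u,v \in L_n$, the pairs $(u,u)$ and $(v,v)$ satisfy $d(u,u) = d(v,v) = 0$, so distance transitivity yields $g \in G$ with $gu = v$. Hence $G$ acts transitively on every level.

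Next I would fix $n \in \bbN_0$, distinct vertices $u,v \in L_n$, and arbitrary pairs $(a,b),(a',b') \in D(u) \times D(v)$. The key geometric observation is that for any $x \in D(u)$ and any $y \in D(v)$, the lowest common ancestor of $x$ and $y$ coincides with the lowest common ancestor $w$ of $u$ and $v$, which lies at some level $k < n$. Consequently
\begin{equation*}
  d(x,y) = 2(n+1-k) = d(u,v) + 2,
\end{equation*}
so both pairs $(a,b)$ and $(a',b')$ lie in $L_{n+1}$ at equal distance $d(u,v)+2$. Distance transitivity of $G$ at level $n+1$ therefore produces an element $g \in G$ with $g(a) = a'$ and $g(b) = b'$.

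Finally, since $g$ is a root-preserving tree automorphism, it maps the unique parent of $a$ to the unique parent of $a'$; but these parents are both $u$, hence $g(u) = u$, and likewise $g(v) = v$. Thus $g \in \St_G(u) \cap \St_G(v)$ and sends $(a,b)$ to $(a',b')$, proving local $2$-transitivity. The only substantive point in the argument is the distance computation comparing pairs at level $n+1$ to pairs at level $n$, and this is an elementary tree fact rather than a real obstacle.
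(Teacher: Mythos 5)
Your proposal is correct and follows essentially the same route as the paper: the key step in both is the observation that $d(u',v') = d(u,v)+2$ for all $u'\in D(u)$, $v'\in D(v)$, so that distance transitivity at level $n+1$ supplies the required element. You merely spell out two points the paper leaves implicit (deriving spherical transitivity from the pairs $(u,u)$, $(v,v)$, and noting that the resulting $g$ fixes $u$ and $v$ because it preserves the parent relation), both of which are fine.
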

\begin{proof}
  Clearly, a distance transitive action is spherically transitive. Let $u,v\in L_n$ be
  distinct vertices.  Every pair of descendants $u' \in D(u)$ and $v' \in D(v)$ satisfies
  $d(u',v') = d(u,v) +2$. Therefore, if $G$ acts distance transitively on $\calT$, then
  $\St_G(u) \cap \St_G(v)$ acts transitively on $D(u)\times D(v)$.
\end{proof}

\begin{example}\label{ex:locally-not-distance}
  Here we construct, using iterated wreath products, groups which act locally
  $2$-transitively, but not distance transitively.
  
  Let $H$ be a finite group with $|H| > 2$. We define $X_i = H$ for all $i$. The associated
  tree $\calT$ is the rooted $|H|$-regular tree. Let $G_i = H \subseteq \Aut(X_i)$, where $H$
  acts on itself by multiplication from the left. The collection $C = (H | i \in \bbN)$ defines the
  iterated wreath product $G_C = \dots H \wr H \wr H$ group.
  By Lemma \ref{lem:wreath-locally2} the group $G_C$ acts locally $2$-transitively on $\calT$.
  
  We verify that $G_C$ does not act distance transitively. 
  Since $|H| \geq3$ we find pairwise distinct elements  $x,y,z \in H$.
  We obtain three vertices in $L_2$: $u_1= v_1 =(x,x)$, $u_2 = (y,x)$ and $v_2=(z,x)$
  with $d(u_1,u_2) = 4 = d(v_1,v_2)$.
  Every element $h \in H$ which satisfies $hx=x$ is already the identity element. Therefore,
  an element $g \in G_C$ with $gu_1 = u_1$ has label $g_{(\varepsilon)} = \id$
  and acts trivially on the first level.
  In particular, $gu_2 \neq v_2$ and we deduce that $G_C$ does not act distance transitively.
\end{example}

\begin{example}\label{ex:dihedral}
  We discuss an example of a group which acts spherically transitively but not locally
  $2$-transitively.  Let $\calT$ be the rooted binary tree, i.e. $X_i = \{0,1\}$ for all $i$.
  We consider the group $G$ which is generated by two elements $a,b \in \calG$.  The
  automorphism $a$ simply swaps the two elements of the first level, i.e.
  \begin{equation*}
      a (u_1x) = (1-u_1)x.
    \end{equation*}
    for all $u_1\in \{0,1\}$ and $x \in L_n$.
   The automorphism $b$ is defined recursively by the rule
   \begin{equation*}
     b (u_1x) = \begin{cases}
       (0) a(x) \quad \text{ if } u_1 = 0\\
       (1) b(x) \quad \text{ if } u_1 = 1
        \end{cases}
 \end{equation*}
 for $u_1\in\{0,1\}$ and $x \in L_n$.  The group $G$ is infinite and generated by two elements
 of order two, which implies that $G$ is isomorphic to the infinite dihedral group
 $D_\infty$. Clearly, $G$ acts transitively on the first level $L_1$. Using induction it follows that
 $G$ acts indeed spherically transitive. We will see that the action of $G$ on
 $\calT$ is not locally $2$-transitive.
   
  Consider the vertices on the second level and their descendants.
  \begin{figure}[h]
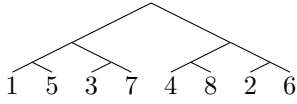

  \Tree[ [ [ [.1 ]
                        [.5 ]]
                      [ [.3 ]
                        [.7 ]]]
                    [ [ [.4 ]
                        [.8 ]]
                      [ [.2 ]
                        [.6 ]]]]
  \caption{A numbering of the vertices on the third level.}\label{fig:tree-numbering}
 \end{figure}

 If we number the vertices on the third level as in Figure \ref{fig:tree-numbering}, then the
 generators $a$ and $b$ act on the third level like reflections of the regular octagon as
 indicated in Figure \ref{fig:octagon}. In particular, the action factors through the dihedral
 group $D_8$.
\begin{figure}[h]
  \begin{tikzpicture}
% create the node
    \node[draw,thick, minimum size=2.5cm,regular polygon,regular polygon sides=8] (a) at (0,0) {};
    \node[circle,label=above:{$1$}, radius=2pt] at (a.corner 2) {};
    \node[circle,label=above:{$2$}, radius=2pt] at (a.corner 1) {};
    \node[circle,label=right:{$3$}, radius=2pt] at (a.corner 8) {};
    \node[circle,label=right:{$4$}, radius=2pt] at (a.corner 7) {};
    \node[circle,label=below:{$5$}, radius=2pt] at (a.corner 6) {};
    \node[circle,label=below:{$6$}, radius=2pt] at (a.corner 5) {};
    \node[circle,label=left:{$7$}, radius=2pt] at (a.corner 4) {};
    \node[circle,label=left:{$8$}, radius=2pt] at (a.corner 3) {};
    \draw[thick, red] (1.3,1.3) -- (-1.3,-1.3);
    \node[circle, label=above right:{$a$}] at (1.2,1.2) {};
    \draw[thick, blue] (0.704,1.699) -- (-0.704,-1.699);
    \node[circle, label=below:{$b$}] at (-0.73,-1.6) {};
\end{tikzpicture}
\caption{The action on the third level.}\label{fig:octagon}
\end{figure}
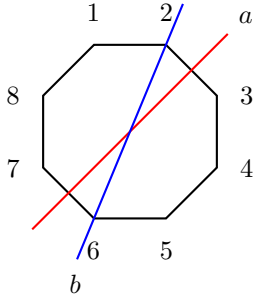
The vertices of the second level correspond to pairs of opposite vertices in the octagon.
Now, if we choose two neighbouring pairs of opposite vertices, say $\{1,5\}$ and $\{2,6\}$, then
the intersection of the two stabilisers consists merely of the identity and the point reflection in the center.
Clearly, this group with two elements cannot act transitively on the $4$-element set $\{1,5\}\times \{2,6\}$. 
\end{example}

Later the following characterisation of locally $2$-transitive actions will be useful.
\begin{lemma}\label{lem:non-sym-criterion}
  Let $G \leq \calG$ be a spherically transitive group.
  Choose a vertex $u_n \in L_n$ for every $n\in \bbN_0$.
  The group $G$ acts locally $2$-transitively on $\calT$
  if and only if for every $n\in \bbN$ and every $v\in L_n$ with $u_{n+1} \notin D(v)$
  the group $\St_G(u_{n+1}) \cap \St_G(v)$ acts transitively on $D(v)$.
\end{lemma}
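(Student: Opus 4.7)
The forward direction should be immediate from the definition: given any $v \in L_n$ with $u_{n+1} \notin D(v)$, let $w$ be the parent of $u_{n+1}$ in $L_n$. Then $w \neq v$, so local $2$-transitivity gives that $\St_G(w) \cap \St_G(v)$ acts transitively on $D(w) \times D(v)$. To move $d \mapsto d'$ within $D(v)$ while fixing $u_{n+1}$, I pick an element sending $(u_{n+1},d) \mapsto (u_{n+1},d')$; this element automatically lies in $\St_G(u_{n+1}) \cap \St_G(v)$.

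For the backward direction, the plan is first to \emph{upgrade the hypothesis} using spherical transitivity: I would show that under the assumption, for every $n \in \bbN$, every $w \in L_{n+1}$ and every $v \in L_n$ with $w \notin D(v)$, the group $\St_G(w) \cap \St_G(v)$ acts transitively on $D(v)$. Indeed, by spherical transitivity on $L_{n+1}$ choose $h \in G$ with $h u_{n+1} = w$; then $v' := h^{-1}v$ satisfies $u_{n+1} \notin D(v')$, so the hypothesis gives transitivity of $\St_G(u_{n+1}) \cap \St_G(v')$ on $D(v')$, and conjugating by $h$ transfers this to $\St_G(w) \cap \St_G(v)$ acting on $D(v)$.

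With the upgraded hypothesis in hand, I would prove local $2$-transitivity by splitting the required action into two steps. Let $u,v \in L_n$ be distinct and let $(d_1,e_1),(d_2,e_2) \in D(u) \times D(v)$. Taking $w = e_1 \in L_{n+1}$, I note $e_1 \notin D(u)$ (since $u \neq v$), so $\St_G(e_1) \cap \St_G(u)$ acts transitively on $D(u)$; pick $g_2$ in this group with $g_2 d_1 = d_2$. Because $g_2$ fixes $e_1$, it also fixes the parent $v$ of $e_1$, so $g_2 \in \St_G(u) \cap \St_G(v)$ and $g_2(d_1,e_1) = (d_2,e_1)$. Next, taking $w = d_2 \in L_{n+1}$, I have $d_2 \notin D(v)$, so $\St_G(d_2) \cap \St_G(v)$ acts transitively on $D(v)$; pick $g_1$ in this group with $g_1 e_1 = e_2$. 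Since $g_1$ fixes $d_2$, it fixes the parent $u$ as well, so $g_1 \in \St_G(u) \cap \St_G(v)$ and $g_1(d_2,e_1) = (d_2,e_2)$. Hence $g_1 g_2 \in \St_G(u) \cap \St_G(v)$ sends $(d_1,e_1)$ to $(d_2,e_2)$.

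The only mildly delicate point is the upgrade step, where one must carefully track that the spherical transitivity element $h$ sends the relation $u_{n+1} \notin D(v')$ to $w \notin D(v)$; this uses only that automorphisms of $\calT$ commute with the parent map. Everything else is a routine verification using the fact that fixing a vertex at level $n+1$ forces fixing its unique parent at level $n$.
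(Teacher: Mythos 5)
Your proof is correct and follows essentially the same route as the paper: the forward direction uses the parent of $u_{n+1}$ exactly as in the paper, and your ``upgrade'' step is precisely the inline conjugation $\gamma = g^{-1}hg$ that the paper performs before composing two elements, each of which fixes one level-$(n+1)$ vertex (hence its parent) while moving the other coordinate. The only cosmetic difference is the order in which the two coordinates are adjusted.
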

\begin{proof}
  Assume first that $G$ acts locally $2$-transitively
  and let $v \in L_n$ ($n\geq 1$) be given such that $u_{n+1}\notin D(v)$.
  Let $u \in L_n$ be the predecessor of $u_{n+1}$ and observe that $u\neq v$.
  By assumption the group $\St_G(u)\cap \St_G(v)$ acts transitively on $D(u)\times D(v)$, thus
  $\St_G(u_{n+1})\cap\St_G(v)$ acts transitively on $D(v)$.

  Conversely, let $x,y \in L_n$ with $x\neq y$ be given.
  Let $(a,b), (c,d) \in D(x)\times D(y)$.
  Since the action of $G$ is spherically transitive, there is $g\in G$ such that $ga = u_{n+1}$.
  By assumption there is an element $h \in \St_G(u_{n+1}) \cap \St_G(gy)$ such that $hgb = gd$.
  Thus $\gamma_1 = g^{-1} h g \in \St_G(a) \cap \St_G(y)$ satisfies $\gamma_1 b = d$.
  By the same argument, we find $\gamma_2 \in \St_G(d) \cap \St_G(x)$ such that $\gamma_2a = c$.
  Now $\gamma_2\gamma_1 \in \St_G(x) \cap\St_G(y)$ satisfies $\gamma_2\gamma_1a = c$ and $\gamma_2\gamma_1 b = d$.
  We deduce that the action of $\St_G(x)\cap \St_G(y)$ on $D(x) \times D(y)$ is transitive. 
\end{proof}
  
\subsection{The representation on the boundary}

Let $G \leq \calG$. We fix a field $F$ whose characteristic does not divide the Steinitz order of
$\overline{G}$. 
In the case where $G$ acts spherically transitive, this assumption implies
that $\chr(F) \nmid m_n$ for every $n\in \bbN$.
For every $n\in \bbN$ the group $G$ (and $\overline{G}$) acts on the
level $L_n$ and we obtain, as above, an associated representation $\rho_{n,F}$ of $\overline{G}$ on the space
\begin{equation*}
   F[L_n] = \Bigl\{\sum_{u \in L_n} c_u u \mid c_u \in F \Bigr\}.
 \end{equation*}
Similarly, for every vertex $u\in L_n$ the stabiliser $\St_{G}(u)$ acts on the descendants $D(u)$
and we obtain an associated representation $\theta_{u,F}$ of $\St_G(u)$ on $F[D(u)]$.  As $G$ acts on
the boundary $\partial\calT$, we obtain a natural representation $\rho_{\partial,F}$ of $\overline{G}$
on the space $\Cinf(\partial\calT,F)$ of $F$-valued locally constant functions on
$\partial\calT$. More precisely, we define
\begin{equation*}
   \rho_{\partial,F}(g)(f)(\xi) := f(g^{-1}\xi)
\end{equation*}
for all $g\in \overline{G}$, $f\in C^\infty(\partial\calT,F)$ and $\xi\in \partial\calT$.  In fact, the
boundary representation can be described in various ways. First, it is the direct limit of the level
representations $\rho_{n,F}$, i.e.,
\begin{equation*}
   \rho_{\partial,F} = \varinjlim_{n\in \bbN} \rho_{n,F}.
\end{equation*}
Moreover, if $G$ is spherically transitive, then $\partial\calT \cong \overline{G}/P$
for any parabolic subgroup $P$ of $\overline{G}$.
It follows from the definition of
smooth induction (see Section~\ref{sec:rep-profinite}) that
$\rho_{\partial,F} = \smInd^{\overline{G}}_P(\triv_F)$.

 Let $v \in V(\calT)$ and let $(\theta^0_{v,F},F[D(v)]^0)$
denote the complement of the one-dimensional trivial subrepresentation inside
$(\theta_{v,F},F[D(v)])$. As a vector space
$$F[D(v)]^0 = \Bigl\{ \sum_{w \in D(v)} c_w w \mid \sum_{w\in D(v)} c_w = 0 \Bigr\}.$$
Let $\Irr_F^0(v)$
denote the set of irreducible $F$-representations of $\St_G(v)$ which occur with non-zero
multiplicity in~$\theta_{v,F}^0$.

\begin{lemma}\label{lem:irreducible}
  Assume that $G$ acts locally $2$-transitively on $\calT$. Let $\pi\in \Irr_F^0(v)$ and
  $\sigma \in \Irr_F^0(w)$ for vertices $v,w \in V(\calT)$. Then the following statements hold.
  \begin{enumerate}
  \item\label{it:Ind-irreducible} The induced representation $I_v(\pi) = \Ind_{\St_G(v)}^G(\pi)$ is
    irreducible.
   \item\label{it:dim-formula} If $v \in L_n$, then $\dim_F I_v(\pi) = |L_n|\dim_F \pi$.
   \item\label{it:distinct} $I_w(\sigma) \cong I_v(\pi)$ if and only if $v = tw$ and
     $\pi \cong \up{t}\sigma$ for some $t \in G$.
   \end{enumerate}
   Here $\up{t}\sigma$ is the representation of $t\St_G(w)t^{-1}$
   given by $\up{t}\sigma(g) = \sigma(t^{-1}gt)$. 
\end{lemma}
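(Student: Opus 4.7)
My plan is to derive all three assertions from one application each of Mackey's formula (applicable because $\St_G(v)$ has finite index $|L_n|$ in $G$ by spherical transitivity) combined with the two preparatory results from Section~2. For part~(1), I would compute $\End_{F[G]}(I_v(\pi))$ via Mackey as the direct sum, over $\St_G(v)$-orbits on $L_n$, of the spaces $\Hom_{F[H_t]}(\pi, \up{t}\pi)$ with $H_t = \St_G(v)\cap\St_G(tv)$. The identity orbit contributes $\End_{F[\St_G(v)]}(\pi)$. For any other orbit ($tv\neq v$), local $2$-transitivity makes $H_t$ act transitively on $D(v)\times D(tv)$, so Lemma~\ref{lem:diagonal-transitive} gives $\dim_F \Hom_{F[H_t]}(F[D(v)], F[D(tv)]) = 1$; this one dimension is already accounted for by the $H_t$-invariant lines $F\cdot\mathbf{1}_{D(v)}$ and $F\cdot\mathbf{1}_{D(tv)}$, so $\Hom_{F[H_t]}(F[D(v)]^0, F[D(tv)]^0) = 0$, and the embeddings $\pi\hookrightarrow F[D(v)]^0$, $\up{t}\pi\hookrightarrow F[D(tv)]^0$ force the summand to vanish. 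Hence $\End_{F[G]}(I_v(\pi)) \cong \End_{F[\St_G(v)]}(\pi)$, and Lemma~\ref{lem:irreducibilityCriterion} yields the irreducibility. Part~(2) is immediate from the definition of $\Ind_{\St_G(v)}^G\pi$ as $V_\pi$-valued equivariant functions on $G$, parametrised by the $|L_n|$-element coset space $\St_G(v)\backslash G$.

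For part~(3) the ``if'' direction is the routine identity $\Ind_{t\St_G(w)t^{-1}}^G(\up{t}\sigma) \cong \Ind_{\St_G(w)}^G(\sigma)$ under conjugation. For the ``only if'' direction I would split by whether $v$ and $w$ lie on the same level. When they lie on \emph{different} levels, say $v\in L_n$ and $w\in L_m$ with $n<m$, a pure dimension count suffices: combining $\dim\pi \leq \dim_F F[D(v)]^0 = m_{n+1}-1$ with part~(2) gives
\[
\dim I_v(\pi) \leq |L_n|(m_{n+1}-1) = |L_{n+1}|-|L_n| < |L_{n+1}| \leq |L_m| \leq \dim I_w(\sigma),
\]
which rules out any isomorphism. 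When $v,w$ lie on the \emph{same} level $n$, I apply Mackey to $\Hom_G(I_w(\sigma), I_v(\pi))$, summing over $\St_G(w)$-orbits on $L_n = G\cdot v$: any orbit represented by $s\in G$ with $sv\neq w$ contributes zero by the same local $2$-transitivity argument as in part~(1), while the singleton orbit $\{w\}$ (provided by spherical transitivity via some $s$ with $sv=w$) contributes $\Hom_{\St_G(w)}(\sigma, \up{s}\pi)$, with $\up{s}\pi$ viewed as a representation of $s\St_G(v)s^{-1} = \St_G(w)$. By irreducibility this last space is non-zero precisely when $\sigma\cong \up{s}\pi$; setting $t = s^{-1}$, so that $v = tw$, this is equivalent to $\pi\cong \up{t}\sigma$, which is the form stated.

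The only care needed is to confirm that Mackey's formula applies for (possibly) infinite $G$---which it does, because $\St_G(v)$ has finite index---and to keep the conjugation conventions straight when translating Mackey's $\sigma\cong \up{s}\pi$ (where $s$ takes $v$ to $w$) into the lemma's form $\pi\cong \up{t}\sigma$ (where $t$ takes $w$ to $v$).
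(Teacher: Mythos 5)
Your proposal is correct and follows essentially the same route as the paper: Mackey's intertwining number theorem combined with Lemma~\ref{lem:diagonal-transitive} (to kill the off-diagonal double cosets via local $2$-transitivity) and Lemma~\ref{lem:irreducibilityCriterion}, plus the same dimension count $|L_n|(m_{n+1}-1) < |L_{n+1}|$ for vertices on different levels. Your phrasing of the vanishing step via $\Hom_{F[H_t]}(F[D(v)]^0, F[D(tv)]^0)=0$ is just a repackaging of the paper's observation that the trivial representation is the only common constituent and that $\pi|_{H_t}$ has no fixed vectors.
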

\begin{proof}
  Since $G$ acts spherically transitively, we have $|G:\St_G(v)| = |L_n|$ and assertion
  \eqref{it:dim-formula} follows.
  
  For $t\in G$ we write $H_t= \St_G(v) \cap \up{t}\St_G(w)$.
  We apply Mackey's intertwining number theorem (see (10.23) in \cite{CurtisReinerI}):
  \begin{equation*}
    \dim_F \Hom_{F[G]}(I_v(\pi),I_w(\sigma)) = \sum_{t \in R}
    \dim_F \Hom_{F[H_t]}(\pi|_{H_t},\up{t}\sigma|_{H_t})
  \end{equation*}
  where the sum runs over a set $R\subseteq G$ of representatives for the double cosets
  $ \St_G(v)\backslash G/\St_G(w)$.

  Assume first that $v$ and $w$ lie in the same level of $\calT$. Fix $t\in R$ and define
  $u=tw$ and $H = H_t$.  Clearly, if $u=v$, then $H = \St_G(v)$ and
  $\dim_F \Hom_{F[H]}(\pi|_{H},\up{t}\sigma|_{H})$ is non-zero exactly, when
  $\up{t}\sigma \cong \pi$. In this case the dimension is the dimension
  of the endomorphism $F$-algebra $\End_{F[H]}(\pi)$.

  Assume now that $u\neq v$.  The action of $G$ is locally
  $2$-transitive, this means that the group $H = \St_G(v) \cap \St_G(u)$ acts transitively on
  $D(v) \times D(u)$.  The representation $\pi|_H$ is a subrepresentation of
  $(\theta_{v,F}|_H, F[D(v)])$ and $\up{t}\sigma|_H$ is a subrepresentation of
  $(\theta_{u,F}|_H,F[D(u)])$.  By Lemma \ref{lem:diagonal-transitive} the trivial representation
  is the unique irreducible representation of $H$ which occurs in both $F[D(v)]$ and in
  $F[D(u)]$.  However, by assumption $\pi$ is non-trivial and since $H$ acts transitively on
  $D(v)$, we deduce that $\pi|_H$ does not admit a non-zero $H$-fixed vector. We deduce that
  $\pi|_H$ and $\up{t}\sigma|_H$ do not share an irreducible constituent; this implies
  $\dim_F \Hom_{F[H]}(\pi|_{H},\up{t}\sigma|_{H}) = 0$.
  In particular, this yields $\dim_F \End_{F[G]}(I_v(\pi))) = \dim_F \End_{\St_G(v)}(\pi)$
  and we deduce \eqref{it:Ind-irreducible} from Lemma \ref{lem:irreducibilityCriterion}.
  Now the computation above implies \eqref{it:distinct} provided $v$ and $w$ lie in the same level.
  
  Finally, assume that $v$ and $w$ lie in different levels, say $v \in L_n$ and $w\in L_m$
  with $n < m$. We observe that
  $\dim_F I_v(\pi) = |L_n| \dim_F \pi \leq |L_n|(m_{n+1}-1) < |L_{n+1}| \leq \dim_F
  I_w(\sigma)$, and thus the representations are not isomorphic.
\end{proof}

% \begin{theorem}\label{thm:decomposition}
%   Assume that $G \leq \calG$ acts locally $2$-transitively on $\calT$.  For every
%   $n \in \bbN_0$ let $v_n \in L_n$ be any vertex of the $n$-th level of $\calT$.  The decomposition of
%   $\rho_\partial$ into irreducible constituents is given by
%   \begin{equation*}
%     \rho_\partial \cong \triv \oplus \bigoplus_{n\in \bbN_0} \bigoplus_{\pi\in\Irr^0(v_n)}m(\pi,\theta_{v_n})\;  I_{v_n}(\pi).
%     \end{equation*}
%     The representation $\rho_\partial$ is multiplicity-free if and only if $\theta_v$ is
%     multiplicity-free for every vertex $v$.
%   \end{theorem}

  \begin{proof}[Proof of Theorem \ref{thm:decomposition} and Theorem \ref{thm:local-global}]
    Recall that the representation $\rho_{\partial,F}$ on the locally constant functions on the
    boundary $\partial \calT$ is the direct limit of the representations $\rho_{n,F}$ of $G$ on
    $F[L_n]$.
    We will proceed by induction on $n$ to show that
    \begin{equation}\label{eq:induction-formula}
      \rho_{n,F} \cong \triv_F \oplus \bigoplus^{n-1}_{j = 0} \bigoplus_{\pi\in\Irr_F^0(v_j)}m(\pi,\theta_{v_j,F})\;  I_{v_j}(\pi).
    \end{equation}
    Clearly, $\rho_{0,F} = \triv_F$ is the trivial representation.
    Assume that formula \eqref{eq:induction-formula} holds and consider $\rho_{n+1,F}$.
    Let $u \in D(v_n)$ be a descendant of $v_n$.
    Then the following short calculation completes the induction step:
    \begin{align*}
      \rho_{n+1,F} &\cong \Ind^G_{\St_G(u)}(\triv_F) \cong \Ind^G_{\St_G(v_n)}\Ind^{\St_G(v_n)}_{\St_G(u)}(\triv_F) \\
      &\cong \Ind_{\St_G(v_n)}^G(\theta_{v_n,F})
      \cong \Ind^G_{\St_G(v_n)}(\triv_F) \oplus \Ind_{\St_G(v_n)}^G(\theta^0_{v_n,F})\\
      &\cong \rho_{n,F} \oplus \bigoplus_{\pi \in \Irr_F^0(v_n)} m(\pi,\theta_{v_n,F}) I_{v_n}(\pi).
    \end{align*}
     By Lemma \ref{lem:irreducible} the irreducible representations $I_{v_n}(\pi)$ are pairwise
    distinct, therefore the multiplicities of irreducible representations in $\rho_{\partial,F}$
    are exactly the multiplicities $m(\pi,\theta_{v_n,F})$.  We deduce that $\rho_{\partial,F}$ is
    multiplicity-free if $\theta_{v,F}$ is multiplicity-free for every $v \in V(\calT)$.
    The converse holds as well since $v_n\in L_n$ was arbitrary.
  \end{proof}
  
  Now we specialise to the case $F=\bbC$.  The following result provides a simple method to
  construct Gelfand pairs of profinite groups.

\begin{corollary}\label{cor:prime-square}
  Assume that for every $i\in \bbN$ the number $m_i$ is either a prime or the square of a
  prime.  Let $G \leq_c \calG$ be a closed subgroup which acts locally $2$-transitively on
  $\calT$.  Then $(G,P_\xi)$ is a Gelfand pair for every parabolic subgroup $P_\xi \subseteq G$.
\end{corollary}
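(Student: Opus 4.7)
The plan is to apply the Local-Global Principle (Theorem \ref{thm:local-global}) to reduce the Gelfand pair property for $(G, P_\xi)$ to a local condition at each vertex, and then to verify that condition by a finite group lemma about transitive actions of prime or prime-square degree.

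By Theorem \ref{thm:local-global}, $(G, P_\xi)$ is a Gelfand pair if and only if $(\St_G(v), \St_G(u))$ is a Gelfand pair for every vertex $v \in V(\calT)$ and every descendant $u \in D(v)$. The spherical transitivity of $G$, implied by local $2$-transitivity, identifies $\St_G(v)/\St_G(u)$ with $D(v)$ as $\St_G(v)$-sets, and this finite action factors through a finite quotient of $\St_G(v)$; the local Gelfand pair condition is thus equivalent to the permutation representation $\theta_{v,\bbC}$ on $\bbC[D(v)]$ being multiplicity-free. Since $|D(v)| = m_{n+1}$ is, by hypothesis, either a prime $p$ or the square $p^2$ of a prime, the corollary reduces to the following finite-group statement: \emph{if $H$ is a finite group acting transitively on a finite set $X$ with $|X| \in \{p, p^2\}$ for some prime $p$, then $\bbC[X]$ is a multiplicity-free $H$-representation}.

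To prove this lemma I would pass to a Sylow $p$-subgroup $P \leq H$. Comparing $p$-parts in the identity $|H| = |X| \cdot |H_x|$ shows that $P$ already acts transitively on $X$ with stabiliser $P_x = P \cap H_x$, which is a Sylow $p$-subgroup of $H_x$. If $\bbC[X]|_P = \bbC[P/P_x]$ is multiplicity-free as a $P$-representation, then $\bbC[X]$ is automatically multiplicity-free as an $H$-representation, since any $H$-isotype $m_\pi \cdot \pi$ with $m_\pi \geq 2$ would restrict to a $P$-representation in which every irreducible constituent appears with multiplicity at least two. For $|X| = p$, the subgroup $P_x$ has index $p$ in the $p$-group $P$ and is therefore normal; consequently $\bbC[P/P_x]$ is the regular representation of $P/P_x \cong \bbZ/p$, a sum of $p$ distinct characters.

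The main obstacle is the case $|X| = p^2$, where $P_x$ need not be normal in $P$. I would handle this by splitting the $H$-action on $X$ into the primitive and imprimitive sub-cases. In the imprimitive case, every non-trivial block has cardinality $p$: writing $M$ for the block-stabiliser gives a chain $H_x \leq M \leq H$ with both intermediate indices equal to $p$, so $\bbC[X] = \Ind_M^H(\Ind_{H_x}^M(\triv))$. The inner induction is multiplicity-free by the prime case, and a Mackey decomposition at the outer induction (again exploiting $[H:M] = p$) shows that no new multiplicities appear. In the primitive case, one invokes the classification of primitive permutation groups of degree $p^2$---these are either of affine type or almost simple---and checks each type individually using Clifford theory applied to the socle.
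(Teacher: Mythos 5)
Your reduction via Theorem~\ref{thm:local-global}, your translation of the local Gelfand condition into multiplicity-freeness of $\bbC[D(v)]$, your restriction principle, and your observation that a Sylow $p$-subgroup $P\leq H$ must act transitively (with $P_x$ a Sylow $p$-subgroup of $H_x$) all match the paper, and the case $|X|=p$ is complete. The case $|X|=p^2$, which is the actual content of the corollary, is where your proposal breaks down. First, in the imprimitive sub-case the assertion that a Mackey decomposition at the outer induction ``shows that no new multiplicities appear'' is not an argument, and the general principle it appeals to is false: induction from a subgroup of prime index can create multiplicities even when the representation being induced is multiplicity-free (for $M=C_3\leq H=S_3$, the multiplicity-free $M$-representation $\omega\oplus\overline{\omega}$ induces to two copies of the two-dimensional irreducible of $S_3$). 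Since distinct constituents of $\bbC[B]$ may be $H$-conjugate, you would need an argument specific to the block structure, which you do not give. Second, the primitive sub-case is left essentially unproved: ``checks each type individually using Clifford theory applied to the socle'' is a plan rather than a proof, it imports the classification of finite simple groups into an elementary statement, and the classification you quote is incomplete --- primitive groups of degree $p^2$ also include groups of product-action type, e.g.\ $S_p\wr S_2$ acting on $\{1,\dots,p\}^2$ for $p\geq 5$, whose socle $A_p\times A_p$ is neither elementary abelian nor simple.

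The missing idea, which the paper uses and which fits seamlessly into your own Sylow reduction, is that you do not need $P_x\trianglelefteq P$: it suffices to find an \emph{abelian transitive} subgroup $A\leq P$, for then $A_x\trianglelefteq A$ automatically, $\bbC[X]|_A$ is the regular representation of the abelian group $A/A_x$ and hence multiplicity-free, and your own restriction argument finishes the proof. Such an $A$ always exists when $|X|=p^2$: pick a non-trivial $g\in Z(P)$; since $P$ is transitive and $g$ is central, $g$ fixes no point and all its orbits have equal length $p$ or $p^2$. If $\langle g\rangle$ is transitive, take $A=\langle g\rangle$; otherwise choose $h\in P$ moving some $w$ off its $\langle g\rangle$-orbit, and take $A=\langle g,h\rangle$, which is abelian and has an orbit of $p$-power length exceeding $p$, hence is transitive. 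This single observation disposes of the primitive and imprimitive cases simultaneously and avoids any appeal to the O'Nan--Scott classification.
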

\begin{proof}
  By the local-global principle, we need to verify that $(\St_G(v),\St_G(u))$ is a Gelfand
  pair for every vertex $v$ and descendant $u\in D(v)$.  Let $H$ denote the image of
  $\St_G(v)$ in the symmetric group $\Sym(D(v))$.  We observe that it suffices to verify that
  $H$ contains an abelian transitive subgroup $A$.  Indeed, the representation of $A$ on
  $\bbC[D(v)]$ is multiplicity-free and thus the representation of $H$ on $\bbC[D(v)]$ cannot
  contain an irreducible constituent of multiplicity exceeding $1$.

  \smallskip

 \emph{Case 1}: $|D(v)|=p$ is a prime number.\\
  Then $H$ contains an element $g$ of order $p$, i.e.\ a $p$-cycle. We
  set $A = \langle g \rangle$.

  \smallskip
  
  \emph{Case 2}: $|D(v)| = p^2$ is the square of a prime number.\\
  Let $K \leq H$ be a Sylow $p$-subgroup, say $|K| = p^r$. We note that $K$ acts transitively on $D(v)$.
  Indeed, suppose that $|K/K\cap\St_H(u)| \leq p$, then $p^{r-1}$ divides $|\St_H(u)|$ and so
  $p^{r+1}$ divides $|\St_H(u)| p^2 = |H|$. This contradicts the fact that $K$ is a Sylow $p$-subgroup.
  Let $g\in Z(K)$ be a non-trivial element in the center of $K$.
  If $\langle g \rangle$ acts transitively, we can choose $A = \langle g \rangle$.
  Otherwise, we find and element $w \in D(v)$ such that $O = \{g^k w \mid k\in \bbZ\}$ has exactly $p$ elements.
  The action of $K$ on $D(v)$ is transitive, so there is some $h\in K$ such that $hw \notin O$.
  We conclude that the group $A = \langle g,h \rangle$ is abelian and acts transitively on $D(v)$.
\end{proof}

\begin{example}\label{ex:wreath-not-Gelfand}
  We return to the setting of Example \ref{ex:locally-not-distance}
  where $G_C = \dots H \wr H \wr H$ is the iterated wreath product of
  some finite group $H$ with $|H| \geq 3$.
  Note that the group $G_C$ is a branch group; see \cite[\S 1.3]{BarGriSun2003}.
  If the group $H$ is non-abelian, then  $(G_C,P_\xi)$ is not a Gelfand pair
  for every $\xi \in \partial \calT$.
  
  For simplicity, we discuss only for the symmetric group $H = S_3$.  In this case
  $X_i=S_3$ is the symmetric group on $3$-letters and let $G_i = S_3 \subseteq \Aut(X_i)$,
  where $S_3$ simply acts on itself by multiplication from the left. Therefore $\calT$ is the
  $6$-regular rooted tree. For every vertex $u$, the action of $\St_G(u)$ on $D(u)$
  factors through the regular representation of $S_3$.

  The regular representation of $S_3$ on $\bbC[S_3]$ decomposes into two one-dimensional
  representations, namely the trivial representation $\triv = $ {\tiny\yng(3)} and the
  sign character~{\tiny\Yvcentermath1$\yng(1,1,1)$}, and a two-dimensional irreducible representation 
   $\tau =$ {\tiny\Yvcentermath1$\yng(2,1)$} which occurs twice. By Theorem \ref{thm:decomposition} the irreducible
  representations $I_v(\tau)$ occur with multiplicity two in
  $\rho_{\partial,\bbC} = \Ind_{P_\xi}^{G_C}(\triv)$ for every vertex $v$ where $P_\xi$ denotes a
  parabolic subgroup of $G_C$. We conclude that $(G_C, P_\xi)$ is not a Gelfand pair.
\end{example}

\subsection{An application to  zeta functions of representations}\label{sec:repr-zeta-functions}
The aim of this section is to use Theorem \ref{thm:decomposition} to compute the zeta function of an induced
representation. The definition given in the introduction suffices for our purposes. For more information
on zeta functions of induced representations the reader should consult~\cite{KiKl}.

 Let $R$ be a compact discrete valuation ring
 with maximal ideal $\frakp \subseteq R$ and uniformiser $\pi \in \frakp$.
 The residue field $k = R/\frakp$ is finite, say of cardinality $q$.
 We note that the ring $R$ is the ring of integers
 of some non-archimedean local field (i.e., $\bbQ_p$, $\bbF_p(\!(t)\!)$ or finite extensions thereof).
 
 The general linear group $\GL_{n}(R)$ is a profinite group.
 For every $r \in \bbN$, we define the \emph{principal congruence subgroup}
 \begin{equation*}
   \GL^r_{n}(R) = \ker\left( \GL_{n}(R) \to \GL_{n}(R/\frakp^r)\right)
 \end{equation*}
 of level $r$ as the kernel of the reduction-mod-$\frakp^r$ homomorphism.
 Let $P$ denote the parabolic subgroup of $\GL_n(R)$ which consists of all matrices
 $A = (a_{i,j}) \in \GL_{n}(R)$ with $a_{i,1} = 0$ for all $i\geq 2$.
 We put $P^r = P \cap \GL^r_{n}(R)$.

 \begin{proposition}
   In the above setting let $G = \GL^1_{N+1}(R)$.  The group $G$ acts locally $2$-transitively on
   a $q^{N}$-regular rooted tree $\calT$ such that $P^1$ is the stabiliser of a point
   $\xi \in \partial \calT$.  The pair $(G,P^1)$ is Gelfand and the zeta function of the smoothly
   induced representation $\rho = \smInd^G_{P^1}(\triv_\bbC)$ is
   \begin{equation}\label{eq:zeta-formula}
      \zeta_\rho(s) = q^{N} \frac{1- q^{-(s+1)N}}{1-q^{-sN}}.
   \end{equation}
 \end{proposition}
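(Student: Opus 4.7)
The plan is to realize $G/P^1$ explicitly as the boundary of the $q^N$-regular rooted tree $\calT$ with levels $L_n = (R/\frakp^n)^N$, and to deduce both local $2$-transitivity and the Gelfand property from a single structural observation: $G$ acts on each $D(v)$ by translations in $k^N$. First I would identify the $G$-orbit of $[1:0:\cdots:0] \in \bbP^N(R)$ with $R^N$ via the parameterization $w \leftrightarrow [1:\pi w_1:\cdots:\pi w_N]$; the lower-unipotent subgroup of $G$ already acts by $w\mapsto w+c$ and is transitive, so $G/P^1 \cong R^N$ and this profinite set is the boundary of the $q^N$-regular tree.

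Next I would establish the translation property. Writing $g = I + \pi M$ with $M = \bigl(\begin{smallmatrix} m_{00} & a \\ b & B \end{smallmatrix}\bigr)$, a direct calculation in projective coordinates gives $g\cdot w = f(w) := (w + b + \pi B w)/(1 + \pi m_{00} + \pi^2 aw)$. Since $g \equiv I \pmod{\frakp}$ the Jacobian of $f$ lies everywhere in $I + \pi M_N(R)$, hence
\[ f(v + \pi^n \delta) \equiv f(v) + \pi^n \delta \pmod{\frakp^{n+1}}. \]
So whenever $g \in \St_G(v)$ for $v \in L_n$, the induced map on $D(v) \cong k^N$ is the translation $\delta \mapsto \delta + \bar\beta$ with $\bar\beta = \pi^{-n}(f(v)-v) \bmod \frakp$. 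To verify local $2$-transitivity I would apply Lemma~\ref{lem:non-sym-criterion} with $u_n = 0$, constructing, for each $v\in L_n\setminus\{0\}$, explicit elements of $\St_G(0_{L_{n+1}}) \cap \St_G(v)$ whose induced translations exhaust $k^N$. Picking a coordinate $\tilde v_i = u_i\pi^m$ of minimal valuation $m<n$ (with $u_i$ a unit) and setting $g_y = I + \pi\bigl(\begin{smallmatrix} 0 & 0 \\ 0 & A_y \end{smallmatrix}\bigr)$, where $A_y$ has $i$-th column $\pi^{n-1-m} u_i^{-1} y$ and zeros elsewhere, one computes $A_y\tilde v = \pi^{n-1} y$, so $g_y \in \St_G(v)\cap\St_G(0_{L_{n+1}})$ and the induced translation on $D(v)$ is exactly $\bar y$; varying $y\in R^N$ sweeps out all of $k^N$.

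Finally, the translation property also shows that the image of $\St_G(v)$ in $\Sym(D(v))$ is the regular action of the abelian group $k^N$ on itself, so $\theta_{v,\bbC}$ is multiplicity-free: it decomposes as the direct sum of the $q^N$ characters of $k^N$. Theorem~\ref{thm:local-global} then gives the Gelfand pair, and substituting $|L_n| = q^{nN}$ together with the $q^N - 1$ non-trivial characters (each of dimension and multiplicity one) into formula~\eqref{eq:zeta-boundary-formula} yields
\[ \zeta_\rho(s) = 1 + (q^N-1)\sum_{n=0}^\infty q^{-nNs} = 1 + \frac{q^N-1}{1-q^{-Ns}} = q^N\cdot\frac{1-q^{-(s+1)N}}{1-q^{-Ns}}. \]
The hard part will be the Jacobian computation and its consequence that the local action is by translations; once this is in place, local $2$-transitivity reduces to exhibiting the matrices $g_y$, and the zeta function is just a geometric series.
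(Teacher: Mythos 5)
Your proposal is correct and follows essentially the same route as the paper: the same projective-space model of the $q^N$-regular tree with $P^1$ as the stabiliser of $(1\colon 0\colon\dots\colon 0)$, local $2$-transitivity via Lemma~\ref{lem:non-sym-criterion} with basepoints $u_n = 0$, multiplicity-freeness from the fact that the local action on $D(v)$ is by translations of $k^N$, and the concluding geometric series. The only (cosmetic) difference is that you derive the translation structure at every vertex from a uniform Jacobian computation and then pick rank-one perturbations $g_y$, whereas the paper uses block-diagonal matrices $\operatorname{diag}(1,A)$ with $A\in\GL_N^{n-\ell}(R)$ for Step~2 and checks the translation property only at the basepoints $u_n$.
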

 \begin{remark}
   We observe that formula \eqref{eq:zeta-formula} only depends on the residue field
   cardinality $q$ and is, in particular, independent of the characteristic of $R$.  This
   phenomenon was observed in several different cases and calls for a deeper investigation;
   see also \cite[Problem~1.1]{KiKl}.
   Moreover, we note that formula \eqref{eq:zeta-formula} was known to hold 
    if $R$ is of characteristic $0$ \cite[Prop.~7.7]{KiKl}.
 \end{remark}
 \begin{proof}
   Let $S$ be a commutative ring. An element $(x_1,\dots,x_{N+1})\in S^{N+1}$ is called \emph{primitive},
   if the ideal generated by $x_1,\dots,x_{N+1}$ is $S$.
   We denote by $\bbP^{N}(S)$ the $S$-rational points of the $N$-dimensional projective space,
   that is
   \begin{equation*}
       \bbP^{N}(S) = \{x \in S^{N+1} \mid x \text{ is primitive }\} / S^\times.
     \end{equation*}
     The equivalence class of
     $(x_1,\dots,x_{N+1})$ in $\bbP^{N}(S)$ will be denoted using projective coordinates as
     $(x_1\colon x_2\colon\dots\colon x_{N+1})$.

     Consider the tree $\calT$ whose vertices of level $n\in \bbN_0$ are the elements of
     \begin{equation*}
       L_n = \{ v \in \bbP^{N}(R/\frakp^{n+1}) \mid v \equiv (1\colon 0 \colon\dots\colon 0)\bmod\frakp\}.
     \end{equation*}
     There is an edge between $u \in L_{n-1}$ and $v \in L_n$ if and only if
     $v \equiv u \bmod \frakp^n$.  We define $u_n \in L_n$ to be the point
     $u_n= (1\colon 0 \colon\dots\colon 0) \in \bbP^{N}(R/\frakp^{n+1})$.  The root of
     $\calT$ is the vertex $u_0$.  The boundary $\partial \calT$ can be (and will be)
     identified with
     $\partial \calT = \{ x \in \bbP^{N}(R) \mid x \equiv u_0 \bmod \frakp \}$.  We fix
     $\xi = (1\colon 0 \colon \dots \colon 0)\in \partial\calT$.  For every $n \in \bbN$ the
     group $\GL_{N+1}(R)$ acts on $\bbP^{N}(R/\frakp^{n})$ and the restrictions of these actions
     to $\GL^1_{N+1}(R)$ provide an action on $\calT$. Observe that $P^1$ is the stabiliser of~$\xi$.

     \smallskip

     \emph{Step 1:} The action of $G$ on $\calT$ is spherically transitive.\\
     Let $v \in L_n$, say $v = (1\colon v_2+\frakp^{n+1}\colon \dots \colon v_{N+1}+ \frakp^{n+1})$
     with certain $v_i \in \frakp$.
     Then $g u_n = v$ where
     \begin{equation*}
       g = \begin{pmatrix}
         1 & 0 & \cdots & 0\\
         v_2 & 1 & & \\
         \vdots & &\ddots & \\
         v_{N+1} & & & 1 \\
         \end{pmatrix} \in \GL_{N+1}^1(R).
       \end{equation*}

       \smallskip

       \emph{Step 2:} The action of $G$ on $\calT$ is locally $2$-transitive.\\
       Let $v\in L_{n-1}$ with $v \neq u_{n-1}$ for some $n\geq 2$.
       Since $G$ acts spherically transitive, it is
       sufficient to show that $\St_G(u_{n})\cap \St_G(v)$ acts transitively on the descendants
       $D(v)$; see Lemma \ref{lem:non-sym-criterion}.
       Let $x,y \in D(v) \subseteq L_n$.
       There are $1\leq \ell < n$ and primitive vectors
       $\tilde{x} = (x_2,\dots,x_{N+1})$ and $\tilde{y} = (y_2,\dots,y_{N+1}) \in R^{N}$
       such that $x = (1\colon \pi^\ell x_2 + \frakp^{n+1}\colon \dots\colon \pi^\ell x_{N+1} + \frakp^{n+1})$
       and $y = (1\colon \pi^\ell y_2 + \frakp^{n+1}\colon \dots\colon \pi^\ell y_{N+1} + \frakp^{n+1})$.
       Indeed, the exponent $\ell$ is the largest number such that $v$ (as well as $x$ and $y$)
       hangs below $u_{\ell-1}$.

       Since the primitive vectors $\tilde{x}$ and $\tilde{y}$ satisfy
       $\tilde{x} \equiv \tilde{y} \bmod \frakp^{n-\ell}$, there is a matrix $A \in \GL_N^{n-\ell}(R)$
       such that $A\tilde{x} = \tilde{y}$.
       Moreover, $n-\ell \geq 1$ and thus the matrix
       \begin{equation*}
         g = \begin{pmatrix}
           1 & 0 & \cdots & 0\\
           0& & & &\\
           \vdots & & A &\\
           0 & & & &
           \end{pmatrix}
         \end{equation*}
         is an element of $\St_G(u_{n}) \cap \St_G(v)$ such that $gx = y$.

         \smallskip

         \emph{Step 3:} The representation of $\St_G(u_n)$ on $\bbC[D(u_n)]$ decomposes
         into pairwise distinct $1$-dimensional irreducible representations.\\
         Let $n \in \bbN_0$. We consider the bijective map
         $D(u_n) \to k^{N}$ which maps
         an element $(1\colon \pi^{n+1}x_2\colon \dots \colon \pi^{n+1} x_{N+1})$
         to $(x_2+\frakp, \dots, x_{N+1}+\frakp)\in k^{N}$.
         A short calculation shows that via this identification the stabiliser $\St_G(u_n)$
         acts like the additive group $k^{N}$ on itself by translations.
         More precisely, $\St_G(u_{n+1})$ is normal in $\St_G(u_n)$ and the quotient is isomorphic
         to $k^N$.
         In particular,
         $\bbC[D(u_n)]$ decomposes like the regular representation of the abelian group $k^{N}$.

         \smallskip

         Finally, we deduce from Theorem \ref{thm:decomposition} that $(G,P^1)$ is a Gelfand pair.
         Moreover, the representation on the boundary $\rho_{\partial,\bbC}$
         is isomorphic to the induced representation $\smInd_{P^1}^G(\triv_\bbC)$.
         It decomposes into a direct sum of the trivial representation and, for every $n\in \bbN_0$,
         exactly $q^{N}-1$ irreducible representations of dimension $q^{nN}$.
         Using the geometric series we obtain
         \begin{equation*}
           \zeta_{\rho}(s) = 1 + \sum_{n = 0}^\infty (q^{N}-1) q^{-snN}
           =  q^{N} \frac{1- q^{-(s+1)N}}{1-q^{-sN}}. \qedhere
         \end{equation*}
 \end{proof}

 \section{Local $2$-transitivity of GGS-groups} \label{sec:GGS} In this section we look at a
 famous family of examples: the GGS-groups.  For more information about these groups the
 reader may consult \cite[Sec.~2.3]{BarGriSun2003}. One aim of this section is to prove that
 GGS-groups which act on a $p$-regular rooted tree are locally $2$-transitive provided that
 $p$ is odd (see Theorem \ref{thm:GGSsimpleversion}).  In fact, we will work in greater
 generality and study generalised GGS-groups which act on $p^k$-regular trees. Theorem
 \ref{thm:GGS-2-trans} provides a criterion for local $2$-transitivity of such groups.

Let $p$ be a prime number and let $k\geq 1$ be an integer. We define $X= \bbZ/p^k\bbZ$.  In
this section we define $X_i = X$ for all $i\in \bbN$. The associated tree $\calT$ is 
the \emph{$p^k$-regular rooted tree}. For convenience, we put $m = p^k$.  We note that the
tree hanging from a vertex $u$ is isomorphic to $\calT$ itself.  For every
$g \in \St_\calG(u)$ the restriction of $g$ to the subtree hanging from $u$ is an element
$g_u \in \calG$. In particular, we obtain an isomorphism
\begin{equation*}
  \psi\colon \calG(1) \to \underbrace{\calG \times \cdots \times \calG}_{m \text{ times }}
\end{equation*}
defined as $\psi(g) = (g_{1+m\bbZ},\dots,g_{m+m\bbZ})$.

Let $\alpha \in \Sym(X)$ be the cyclic permutation $\alpha(u) = u+1$.
Let $a \in \calG$ be the automorphism which permutes the vertices on the first level, and
consequently also the trees hanging from them, cyclically according to $\alpha$, i.e.\
the label at the root is $a_{(\epsilon)} = \alpha$.

Let $\fat{e} = (e_1,\dots,e_{m-1}, e_m) \in (\bbZ/m\bbZ)^{m}$. We say that $\fat{e}$ is a
\emph{defining vector} if $e_m =0$ and $e_i \not\equiv 0 \bmod p$ for some index $i$.  In most
references on GGS-groups the defining vector is of length $m-1$ and the last entry $e_m=0$ is
ignored. For our purposes, however, it is very convenient to consider the additional entry
$e_m=0$ to be part of the defining vector. In addition, it is convenient to define
$e_{i+m\ell} = e_i$ for all $\ell\in \bbZ$, such that we are able to consider the indices of
$e_1,\dots,e_m$ modulo $m$. We use a defining vector $\fat{e}$ to define recursively an
automorphism $b_{\fat{e}} \in \calG(1) \subseteq \calG$ by imposing
\begin{equation*}
    \psi(b_{\fat{e}}) = (a^{e_1}, a^{e_2}, \dots, a^{e_{m-1}}, b_{\fat{e}}).
\end{equation*}

\begin{definition}
  Let $\fat{e}$ be a defining vector.
  The subgroup $G_{\fat{e}}$ of $\calG$ generated by $a$ and $b_{\fat{e}}$ is called the \emph{GGS-group} defined
  by $\fat{e}$.
\end{definition}
Whenever the defining vector is clear from the context, we write $G$ instead of $G_{\fat{e}}$
and $b$ instead of $b_{\fat{e}}$.

\subsection{Aperiodic defining vectors}
We say that a defining vector $\fat{e}$ is \emph{periodic modulo $p$}, if
there is a $t \in \{1,\dots, m-1\}$ such that $e_i \equiv e_{i+t} \bmod p$ for all $i$.
Otherwise, $\fat{e}$ is said to be \emph{aperiodic modulo $p$}.
For every $t\in \{1,\dots,m-1\}$ we define the $(2\times m)$-matrix
\begin{equation*}
  A_t = \begin{pmatrix}
    e_1 & \dots & e_{m}\\
    e_{1+t} & \dots &e_{m+t}\\
  \end{pmatrix}
\end{equation*}
  with entries in the local ring $\bbZ/m\bbZ$.
  The reduction of $A_t$ modulo $p$ will be denoted by $\overline{A}_t$.
 
  \begin{lemma}\label{lem:CharacterizeAperiodic}
    Let $\fat{e} \in (\bbZ/m\bbZ)^m$ be a defining vector.
    The following statements are equivalent.
    \begin{enumerate}
    \item\label{it:aperiodic} $\fat{e}$ is aperiodic modulo $p$.
    \item\label{it:Abar-surjective} $\overline{A}_t \in M_{2\times m}(\bbF_p)$ has rank $2$
      for all $t\in\{1,\dots,m-1\}$.
    \item\label{it:A-surjective} Left multiplication with $A_t$ defines a surjective $\bbZ/m\bbZ$-linear map
      from
      $(\bbZ/m\bbZ)^m$ onto $(\bbZ/m\bbZ)^2$
      for all $t\in\{1,\dots,m-1\}$.
    \end{enumerate}
  \end{lemma}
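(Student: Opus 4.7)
The plan is to establish the cycle of implications \eqref{it:A-surjective} $\Rightarrow$ \eqref{it:Abar-surjective} $\Rightarrow$ \eqref{it:aperiodic} $\Rightarrow$ \eqref{it:Abar-surjective} $\Rightarrow$ \eqref{it:A-surjective}. The equivalence of \eqref{it:Abar-surjective} and \eqref{it:A-surjective} is a standard Nakayama-type argument, while the equivalence with \eqref{it:aperiodic} is the arithmetic core of the lemma.

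First I would observe that \eqref{it:A-surjective} $\Rightarrow$ \eqref{it:Abar-surjective} follows by reducing modulo $p$: if $A_t$ is surjective, so is its reduction, and a surjective linear map $\bbF_p^m \to \bbF_p^2$ has rank $2$. In the other direction, if $\overline{A}_t$ has rank $2$ then the image $M$ of $A_t$ in $(\bbZ/m\bbZ)^2$ satisfies $M + p(\bbZ/m\bbZ)^2 = (\bbZ/m\bbZ)^2$; since $p$ lies in the Jacobson radical of the finite local ring $\bbZ/m\bbZ$, Nakayama's lemma yields $M = (\bbZ/m\bbZ)^2$, proving \eqref{it:A-surjective}.

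Next I would prove \eqref{it:Abar-surjective} $\Rightarrow$ \eqref{it:aperiodic} by contraposition: if $\fat{e}$ is periodic modulo $p$ with period $t \in \{1,\dots,m-1\}$, then the two rows of $\overline{A}_t$ coincide, so $\overline{A}_t$ has rank at most one.

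The main work is \eqref{it:aperiodic} $\Rightarrow$ \eqref{it:Abar-surjective}, again by contraposition. Assume some $\overline{A}_t$ has rank at most one. Then there exist $\lambda,\mu \in \bbF_p$, not both zero, with $\lambda e_i + \mu e_{i+t} \equiv 0 \bmod p$ for all $i$. Since $\fat{e}$ is a defining vector, some $e_i$ is nonzero modulo $p$, so neither $\lambda$ nor $\mu$ can vanish: if $\mu = 0$ all $e_i$ would vanish mod $p$, and symmetrically for $\lambda = 0$. Hence there is a nonzero $c \in \bbF_p^\times$ such that
\begin{equation*}
e_{i+t} \equiv c\, e_i \bmod p \qquad \text{for all } i,
\end{equation*}
and by iteration $e_{i+jt} \equiv c^j e_i \bmod p$. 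Let $r$ denote the multiplicative order of $c$ in $\bbF_p^\times$, so $r \mid p-1$; in particular $\gcd(r,p)=1$. Setting $t' := rt \bmod m$, we get $e_{i+t'} \equiv c^r e_i \equiv e_i \bmod p$ for all $i$. The key point is that $t' \not\equiv 0 \bmod m$: indeed $m \mid rt$ together with $\gcd(r,m) = \gcd(r,p^k) = 1$ would force $m \mid t$, contradicting $t \in \{1,\dots,m-1\}$. Hence $t'$ witnesses periodicity modulo $p$, contradicting \eqref{it:aperiodic}.

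The main obstacle is the last implication, and specifically the step of extracting a genuine period from the scalar relation $e_{i+t} \equiv c\, e_i$. The crucial arithmetic input is the coprimality of $r$ (a divisor of $p-1$) with $m = p^k$, which is what allows $c$ to be killed without making $t'$ vanish modulo $m$. The defining-vector hypothesis is used precisely to exclude the degenerate cases $\lambda=0$ and $\mu=0$.
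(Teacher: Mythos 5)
Your proof is correct and follows essentially the same approach as the paper: the hard equivalence of (1) and (2) is done by contraposition, exploiting that the scalar relating the two rows of $\overline{A}_t$ has multiplicative order prime to $p$, and the equivalence of (2) and (3) is the Nakayama argument over the local ring $\bbZ/p^k\bbZ$. The only cosmetic differences are that the paper forces the scalar to equal $1$ outright via $\lambda^{m}=\lambda^{p^k}=\lambda$ in $\bbF_p$ (so the same $t$ remains a period, rather than your $t'=rt \bmod m$), and it closes the cycle with the implication from (3) to (1) using the explicit preimage of $(1,0)$ instead of your direct row-coincidence argument from (2) to (1).
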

  \begin{proof}
    \eqref{it:aperiodic} $\implies$ \eqref{it:Abar-surjective}: Assume that $\overline{A}_t$
    does not have rank $2$.  Then the two rows are linearly dependent and there is
    $\lambda \in \bbF_p$ such that $e_i \equiv \lambda e_{i+t} \bmod p$
    for all $i$.  We infer that
    $e_i\equiv \lambda^{m} e_{i+mt} \equiv \lambda^{p^k} e_i \bmod p$  for all $i\in\{1,\dots,m\}$.  By
    assumption $e_j \not\equiv 0 \bmod p$ for some $j$ and we conclude $\lambda = 1$,
    i.e., $\fat{e}$ is periodic modulo $p$.

    \eqref{it:Abar-surjective} $\implies$ \eqref{it:A-surjective}: Since $m = p^k$, the ring
    $\bbZ/m\bbZ$ is a local ring.  In particular, it follows from Nakayama's lemma that the linear map defined by $A_t$ is
    surjective onto $(\bbZ/m\bbZ)^2$ if and only if the linear map defined by $\overline{A}_t$
    is surjective onto $\bbF_p^2$.

    \eqref{it:A-surjective} $\implies$ \eqref{it:aperiodic}: Let $t\in\{1,\dots,m-1\}$. By
    assumption there is a vector $c = (c_1,\dots,c_m) \in (\bbZ/m\bbZ)^m$ such that
    $\sum_{i=1}^m e_ic_i = 1$ and $\sum_{i=1}^m e_{i+t}c_i = 0$.  In particular, since
    $0 \not\equiv 1 \bmod p$, we deduce that $e_j \not\equiv e_{j+t} \bmod p$ for some
    $j\in\{1,\dots,m\}$.
  \end{proof}

  \begin{remark}\label{rem:p-always-aperiodic}
    Assume that $k=1$, that is $m = p$, where $p$ is a prime number.  In this case every defining vector
    $\fat{e}$ is aperiodic (modulo $p$).  Indeed, suppose that $e_i = e_{i+t}$ for all $i\in \{1,\dots,p\}$
    and some $t\in \{1,\dots,p-1\}$.  The element $t+p\bbZ$ generates the cyclic group
    $\bbZ/p\bbZ$ and we conclude that $e_i = e_j$ for all $i$ and $j$. This yields a contradiction
    since $e_p = 0$, whereas by assumption $e_i \neq 0$ for some $i \in \{1,\dots,p-1\}$
  \end{remark}

  \subsection{A criterion for local $2$-transitivity of GGS-groups}
  Let $p$ be a prime number, let $m = p^k$ and let $\fat{e} \in (\bbZ/m\bbZ)^m$ be a defining vector.
  We consider the GGS-group $G = G_{\fat{e}}$  and its subgroup
  \begin{equation*}
      H =\bigl\langle [b^k,a^\ell] \mid k,\ell \in \{1,\dots,m-1\}\bigr\rangle.
  \end{equation*}
  Here $[b^k,a^\ell] = b^k a^{\ell}b^{-k}a^{-\ell}$ denotes the commutator of $b^k$ and $a^\ell$.
  We write $b_i = a^i b a^{-i}$, so that
  $\psi(b_i) = (a^{e_{1-i}}, \dots ,a^{e_{-1}},\underbrace{b}_{i},a^{e_1},\dots,a^{e_{m-i}})$.
  In particular, we have the identity
  \begin{align}\label{eq:commutator-formula}
    \psi([b^k,a^\ell]) &=\psi( b^k b_\ell^{-k})\nonumber\\
                       &= (a^{k(e_1-e_{1-\ell})}, \dots, a^{k(e_{\ell-1}-e_{-1})},
                         a^{ke_\ell}b^{-k},a^{k(e_{\ell+1}-e_1)},\dots,b^ka^{-ke_{m-\ell}}).
  \end{align}

  \begin{lemma}\label{lem:H-transitive}
    Let $u\in L_n$ with $n\geq 1$. The group $H \cap \St_G(u)$  acts transitively on the descendants
    $D(u)$. 
  \end{lemma}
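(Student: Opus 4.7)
The plan is to argue by induction on $n$.

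For the base case $n=1$: every element of $H$ lies in $\calG(1) \subseteq \St_G(u)$ for any $u\in L_1$, so $H\cap\St_G(u)=H$. The commutator formula~\eqref{eq:commutator-formula} yields that $[b^k,a^\ell]$ has label $\alpha^{k(e_i-e_{i-\ell})}$ at the first-level vertex of position $i$ (using $e_m=0$ and indices modulo $m$). Since all labels of elements of $G$ are powers of $\alpha$ and $|D(u)|=m=p^k$, transitivity on $D(u)$ reduces to exhibiting an element with label exponent coprime to $p$. Taking $k=1$, I would look for $\ell\in\{1,\dots,m-1\}$ with $e_i-e_{i-\ell}\not\equiv 0\pmod p$. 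If no such $\ell$ existed, then every $e_j$ would satisfy $e_j\equiv e_i\pmod p$; since $e_m=0$ appears among them, every entry would be $\equiv 0\pmod p$, contradicting that $\fat{e}$ is a defining vector.

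For the inductive step ($n\geq 2$), let $u=(u_1,u')\in L_n$ with $u'\in L_{n-1}$ a vertex in the subtree hanging below $u_1$. Identifying this subtree canonically with $\calT$, an element $h\in H$ stabilises $u$ precisely when its section $h_{u_1}\in\calG$ stabilises $u'$, and then $h_{(u)}=(h_{u_1})_{(u')}$. Writing $\pi_{u_1}\colon H\to\calG$ for the section homomorphism at $u_1$, the task reduces to exhibiting an element of $\pi_{u_1}(H)\cap\St_\calG(u')$ whose label at $u'$ has exponent coprime to $p$.

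The decisive structural step is to show that $\pi_{u_1}(H)$ contains a copy of $H$ inside the GGS-group acting on the subtree, after which the inductive hypothesis closes the argument. For $u_1\neq m$, formula~\eqref{eq:commutator-formula} gives $\pi_{u_1}([b^k,a^{u_1}])=a^{ke_{u_1}}b^{-k}$ together with powers of $a$ of the form $a^{k(e_{u_1}-e_{u_1-\ell})}$ for $\ell\neq u_1$; the base-case argument (applied at the vertex $u_1$) shows the latter family generates $\langle a\rangle$, and multiplying by $a^{ke_{u_1}}b^{-k}$ gives $b\in\pi_{u_1}(H)$, so $\pi_{u_1}(H)\supseteq\langle a,b\rangle=G\supseteq H$. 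For the spine case $u_1=m$, we have $\pi_m([b^k,a^\ell])=b^ka^{-ke_{m-\ell}}$; inverse products of these for fixed $k$ produce powers of $a$ in $\pi_m(H)$, and commutators with $b^ka^{-ke_{m-\ell}}$ should recover each $[b^{k'},a^{\ell'}]$.

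The main obstacle is the spine case $u_1=m$: the commutator bookkeeping required to extract all the generators $[b^{k'},a^{\ell'}]$ from the explicit sections $b^ka^{-ke_{m-\ell}}$ is delicate and relies on the arithmetic features of the defining vector, so this is where the proof will demand the most care.
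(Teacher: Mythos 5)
Your base case is fine and is essentially the paper's: the label of $[b^k,a^\ell]$ at the first-level vertex $i$ is $\alpha^{k(e_i-e_{i-\ell})}$ uniformly in $\ell$, and since $e_m=0$ occurs among the $e_{i-\ell}$, some difference is a unit modulo $p$. The skeleton of your inductive step (show that the section subgroup at $u_1$ contains $H$, then apply the inductive hypothesis in the subtree) is also the paper's. The gap is in your treatment of $u_1\neq m$. The unit difference produced by the base-case argument may be the one belonging to $\ell=u_1$, namely $e_{u_1}-e_m=e_{u_1}$ itself; but for $\ell=u_1$ the section at position $u_1$ is $a^{ke_{u_1}}b^{-k}$, not a pure power of $a$, so it contributes nothing to your ``latter family''. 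Concretely, take $m=p=3$ and $\fat{e}=(1,1,0)$ (a legitimate defining vector, and the lemma carries no aperiodicity hypothesis). For $u_1=1$ the only pure $a$-power sections are $a^{k(e_1-e_2)}=1$, so $\pi_1(H)=\langle ab^{-1},\,a^2b^{-2}\rangle$; its image in $G/[G,G]\cong C_3\times C_3$ is the cyclic group $\langle \bar a\bar b^{-1}\rangle$, which contains neither $\bar a$ nor $\bar b$. Hence your conclusion $\pi_{u_1}(H)\supseteq\langle a,b\rangle=G$ is simply false here. The paper resolves this by a dichotomy on whether $e_{u_1}$ generates $\bbZ/m\bbZ$: if it does not, your argument works (some $e_{u_1}-e_{u_1-\ell}$ with $\ell\neq u_1$ is a unit and one really does get all of $G$); if it does, one instead multiplies sections and their inverses,
\begin{equation*}
a^{-\ell e_{u_1}}b^{\ell}\cdot b^{k-\ell}a^{(\ell-k)e_{u_1}}\cdot a^{ke_{u_1}}b^{-k}=[a^{-\ell e_{u_1}},b^k],
\end{equation*}
which, since $e_{u_1}$ is a unit, places every generator of $H$ (though not of $G$) inside $\pi_{u_1}(H)$ --- and $H\subseteq\pi_{u_1}(H)$ is all the induction requires.

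The second gap is the spine case $u_1=m$, which you explicitly do not carry out. Note that in the paper's dichotomy this vertex is not special: $e_m=0$ never generates $\bbZ/m\bbZ$, so it falls into the first case (some $e_{m-j}$ is a unit, hence so is $e_m-e_{m-j}=-e_{m-j}$), and again one should aim only for $H\subseteq\pi_m(H)$ rather than trying to recover each commutator $[b^{k'},a^{\ell'}]$ individually from the sections $b^ka^{-ke_{m-\ell}}$, which is the bookkeeping you correctly identify as delicate. As it stands, both the unit-$e_{u_1}$ subcase and the spine case are unproved, so the proposal does not yet establish the lemma.
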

  \begin{proof}
    We first verify that the assertion holds for $u \in L_1$.
    Say $u = t + m\bbZ$. We consider the group of labels $H_{(u)} = \{g_{(u)}\mid g \in H\}$.
    Formula \eqref{eq:commutator-formula}
    implies that
    \begin{equation*}
      H_{(u)} = \langle \alpha^{e_t-e_j} \mid j \in \{1,\dots,m\} \rangle
    \end{equation*}
    If $e_t$ generates $\bbZ/m\bbZ$, then $H_{(u)}$ contains the element $\alpha^{e_t - e_m}$
    of order $m$.  On the other hand, if $e_t$ is no generator of $\bbZ/m\bbZ$, then some $e_j$ is and
    thus also $e_t - e_j$.  We deduce that $H_{(u)}$ is the cyclic group
    generated by $\alpha$ which acts transitively on $D(u)$.  As a next step, we will show
    that $H_u = \{ g_u \mid g \in H\}$ contains $H$ for $u\in L_1$ as above.  Consider the
    following two cases.

     \smallskip
    
     \emph{Case 1}: $e_t$ does not generate the group $\bbZ/m\bbZ$.\\
     In this case $e_{t-j}$ generates $\bbZ/m\bbZ$ for some $j\in\{1,\dots,m-1\}$.  Clearly,
     then also $e_t - e_{t-j}$ is a generator.  By \eqref{eq:commutator-formula} the group
     $H_u$ is generated by $a^{e_t-e_{t-j}}$ and the elements $a^{ke_t}b^{-k}$ for all $k$.
     We conclude that $H_u = \langle a,b\rangle = G$ and thus $H\subseteq H_u$.

       \smallskip

       \emph{Case 2:} $e_t$ is a generator of $\bbZ/m\bbZ$.  In this case, we use
       \eqref{eq:commutator-formula} to deduce that the group $H_u$ contains all elements of
       the form $a^{ke_t}b^{-k}$ for $k\in \bbZ$.  A short calculation yields
    \begin{equation*}
      H_u \ni  a^{-\ell e_t}b^\ell\: b^{k-\ell}a^{(\ell-k)e_t} \:  a^{k e_t}b^{-k}
      = b^k_{-\ell e_t}b^{-k} = [a^{-\ell e_t}, b^k]
    \end{equation*}
    for all $k,\ell \in \{1,\dots,m-1\}$. Since $e_t$ is a generator of $\bbZ/m\bbZ$, we deduce
    $H \subseteq H_u$.

    Finally, it follows by induction on $n\geq 1$, that $\St_G(u)\cap H$ acts transitively on
    $D(u)$ for all $u \in L_n$.
  \end{proof}

  For $p=2$ we need to exclude a problematic case.
  We say that the defining vector $\fat{e}$ is \emph{centered},
  if $e_{m/2}$ is the only entry of $\fat{e}$ which generates $\bbZ/m\bbZ$.

  \begin{lemma}\label{lem:HtimesH}
    Let $p$ be a prime. If $p=2$, we assume that $\fat{e}$ is \emph{not} centered.
    
    Let $u, v \in L_1$ with $u\neq v$.  The image of the homomorphism
    $\Psi_{u,v} \colon [G,G] \to G\times G$ defined as $\Psi_{u,v}(g) = (g_u, g_v)$
    contains the subgroup $H\times H$.
  \end{lemma}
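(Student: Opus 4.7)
My plan is to show, by the symmetry $u \leftrightarrow v$, that it suffices to verify $H \times \{1\} \subseteq \Psi_{u,v}([G, G])$; the symmetric statement then yields $\{1\} \times H$, and since the image is a subgroup of $G \times G$, their product contains $H \times H$. The symmetry itself is realised by conjugating with a power of $a$, which permutes the first level cyclically and so exchanges the two coordinates of $\Psi_{u,v}$ up to a choice of representative.

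The key computation is that of $\psi$ on the commutators $[b_u^s, b_k^t]$ for $k \in \{1, \ldots, m\} \setminus \{u\}$ and $s, t \in \bbZ$. Using $\psi(b_i)_p = a^{e_{p-i}}$ for $p \neq i$ and $\psi(b_i)_i = b$, a componentwise commutator calculation shows that $\psi([b_u^s, b_k^t])$ is trivial at every position except $u$ and $k$, with entry $[b^s, a^{t e_{u-k}}]$ at position $u$ and $[a^{s e_{k-u}}, b^t]$ at position $k$. Consequently, for $k \notin \{u, v\}$ the image $\Psi_{u,v}([b_u^s, b_k^t]) = ([b^s, a^{t e_{u-k}}], 1)$ already lies in $H \times \{1\}$, while in the borderline case $k = v$ the $v$-component $[a^{s e_{v-u}}, b^t]$ vanishes whenever $s\, e_{v-u} \equiv 0 \pmod m$, producing further elements with first component $[b^s, a^{t e_{u-v}}]$.

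Taking products and varying $s, t, k$, the first coordinate of the image generates the subgroup of $H$ spanned by all $[b^s, a^\ell]$ whose exponent $\ell$ lies in the additive subgroup of $\bbZ/m\bbZ$ generated by the entries $\{e_j : j \in \{1, \ldots, m-1\}\}$. Since $\fat{e}$ is a defining vector, some $e_j$ is coprime to $m$, so this span is all of $\bbZ/m\bbZ$ and every generator $[b^s, a^\ell]$ of $H$ is in principle reachable. The hypothesis that $\fat{e}$ is not centered when $p = 2$ enters precisely at this step: it rules out the degenerate configuration where the unique unit entry of $\fat{e}$ is $e_{m/2}$, whose index is self-inverse modulo $m$, so that the critical choice $u - v = m/2$ cannot be handled by either the $k \notin \{u, v\}$ or the $k = v$ construction.

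I expect the main obstacle to be the combinatorial bookkeeping behind the last paragraph. When the unit entries of $\fat{e}$ happen to concentrate at the excluded index $u - v$, the $k \notin \{u, v\}$ commutators alone produce $[b^s, a^\ell]$ only for $\ell$ in a proper subgroup of $\bbZ/m\bbZ$, and the $k = v$ construction restricts $s$ unless $e_{v-u} = 0$. Verifying that, under the stated hypothesis, one nevertheless attains every generator $[b^s, a^\ell]$ of $H$ in the first coordinate will require combining both families of commutators, possibly together with conjugations by powers of $a$ (which cyclically shift the supports), and exploiting Lemma \ref{lem:H-transitive}.
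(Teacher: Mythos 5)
Your key computation is exactly right: the three identities you derive for $\Psi_{u,v}\bigl([b_u^s,b_k^t]\bigr)$ (the cases $k\notin\{u,v\}$, and the mixed case $k=v$) are precisely equations \eqref{eq:comm-1}--\eqref{eq:comm-3} of the paper's proof. But the proposal has a genuine gap, and you have located it yourself without closing it: when the only unit entries of $\fat{e}$ sit at the single index $u-v$, the commutators with $k\notin\{u,v\}$ only produce exponents $\ell$ in the subgroup generated by $\{e_j : j\neq 0,\, u-v\}$, which is then proper, and the mixed commutator forces a constraint on $s$ if you insist on making its second component trivial. Your final paragraph says resolving this ``will require combining both families \dots'' --- that combination is the actual content of the lemma and is missing. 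Moreover, the symmetry you invoke to reduce to $H\times\{1\}$ does not exist: conjugation by $a^r$ sends $\Psi_{u,v}$ to $\Psi_{u-r,v-r}$, preserving the difference $u-v$, so it can never exchange $u$ and $v$ (except when $2(u-v)\equiv 0$, which is exactly the problematic configuration). Reducing instead by relabelling the ordered pair $(u,v)\mapsto(v,u)$ is logically fine, but then you must prove $H\times\{1\}$ directly for \emph{every} ordered pair, and the direct construction fails for the pair whose difference carries the unique unit entry.

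The paper's resolution is deliberately asymmetric. Since for odd $p$ one has $u-v\not\equiv v-u \bmod m$, and for $p=2$ the non-centered hypothesis supplies a generator $e_s$ with $s\neq m/2$, at least one of the two orientations admits a generator $e_{u-k}$ (resp.\ $e_{v-k}$) at an index other than the excluded one; this yields, say, $\{1\}\times H$ inside the image directly. One then uses the mixed commutator $[b_u^s,b_v^t]$, whose image $\bigl([b^s,a^{te_{u-v}}],[a^{se_{v-u}},b^t]\bigr)$ has \emph{both} components in $H$ --- the point you miss is that the second component need not be trivial, only an element of $H$, since it can be cancelled by the factor $\{1\}\times H$ already obtained. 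In the problematic case $e_{u-v}$ is a generator, so the first components $[b^s,a^{te_{u-v}}]$ exhaust the generators of $H$, giving $H\times\{1\}$ and hence $H\times H$. Without this cancellation step the argument does not go through, so as written the proposal is incomplete.
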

  \begin{proof}
    Say $u = i_0 + m\bbZ$ and $v = j_0 + m\bbZ$ with $1\leq i_0, j_0 \leq m$.
    Let $i,j \in \{1,\dots,m\} \setminus \{i_0, j_0\}$.
  Is is easy to verify that for all $k,\ell \in \bbZ$ the following identities hold:
  \begin{align}
    \Psi_{u,v}\left([b^k_i,b^\ell_{j_0}]\right)  &= \left(1, [a^{ke_{j_0-i}},b^\ell]\right)\label{eq:comm-1}, \\
    \Psi_{u,v}\left([b^k_{i_0},b^\ell_{j}]\right) &= \left([b^k,a^{\ell e_{i_0-j}}], 1\right)\label{eq:comm-2}, \\
    \Psi_{u,v}\left([b^k_{i_0},b^\ell_{j_0}]\right)  &= \left([b^k,a^{\ell e_{i_0-j_0}}], [a^{ke_{j_0-i_0}},b^\ell]\right)\label{eq:comm-3}.
  \end{align}
  Suppose that $e_{i_0-j}$ is a generator of $\bbZ/m\bbZ$ for some $j\neq j_0$.  Then equation
  \eqref{eq:comm-2} shows that the image of $\Psi_{u,v}$ contains $H \times 1$.  However, by
  \eqref{eq:comm-1} and \eqref{eq:comm-3} the projection of
  $\Psi_{u,v}([G,G])\cap (H\times H)$ onto the second factor contains $H$, therefore the claim
  follows.  By the same argument as above we can conclude if $e_{j_0-i}$ is a generator of
  $\bbZ/m\bbZ$ for some $i \neq i_0$.

  Finally, we observe that one of the two cases above applies.  Indeed, if $p$ is odd, then at
  least one entry of $\fat{e}$ is a generator and we have
  $i_0 -j_0 \not\equiv j_0 - i_0 \bmod m$.  On the other hand, suppose that $p=2$ and
  $i_0 -j_0 \equiv j_0 - i_0 \bmod m$. In this case $i_0-j_0 = \pm m/2$. Since $\fat{e}$ is
  not centered, there is some $j \neq j_0$ such that $e_{i_0-j}$ generates
  $\bbZ/m\bbZ$.
\end{proof}

\begin{theorem}\label{thm:GGS-2-trans}
  Let $p$ be a  prime number and recall that $m=p^k$.
  Let $\fat{e} \in (\bbZ/m\bbZ)^m$ be a defining vector. If $p=2$, we assume that $\fat{e}$ is not centered.
  
  The GGS-group $G_{\fat{e}}$ acts locally $2$-transitively on $\calT$ if and only if
  the vector $\fat{e}$ is aperiodic modulo $p$.
  In this case $(\overline{G}_{\fat{e}}, P_\xi)$ is a Gelfand pair for every parabolic
  subgroup $P_\xi\leq \overline{G}_{\fat{e}}$ 
\end{theorem}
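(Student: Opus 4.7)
The plan is to apply Lemma~\ref{lem:non-sym-criterion} with the base vertices $u_n = (m,m,\ldots,m) \in L_n$ for every $n$. Every element of $G = G_{\fat{e}}$ has level-$1$ labels in the cyclic group $\langle \alpha \rangle \cong \bbZ/m\bbZ$, so the label signature $\sigma\colon G\cap\calG(1) \to (\bbZ/m\bbZ)^m$ is a group homomorphism satisfying $\sigma(b_i) = (e_{j-i})_{j=1}^{m}$ (with $e_0 := 0$). Since the only power of $\alpha$ fixing the level-$1$ vertex $u$ is the identity, one has $\St_G(u) = G\cap\calG(1)$ for every $u \in L_1$, whence $\St_G(u) \cap \St_G(v) = G\cap\calG(1)$ for any two distinct $u,v \in L_1$.

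For the necessity direction, suppose $\fat{e}$ is periodic modulo $p$ with period $t \in \{1,\ldots,m-1\}$. By Lemma~\ref{lem:CharacterizeAperiodic} the matrix $A_t$ does not surject onto $(\bbZ/m\bbZ)^2$. Choose $u,v \in L_1$ with $u-v \equiv t \pmod m$; then the image of $\St_G(u)\cap\St_G(v)$ in $(\bbZ/m\bbZ)^2$ under the projection of $\sigma$ to the $(u,v)$-coordinates is generated by $\{(e_{u-i},e_{v-i}) : i=1,\ldots,m\}$, which coincides up to column permutation with the image of $A_t$ and is hence a proper subgroup. Therefore the action on $D(u)\times D(v) \cong (\bbZ/m\bbZ)^2$ is not transitive, and $G$ is not locally $2$-transitive.

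For sufficiency, assume $\fat{e}$ is aperiodic modulo $p$ (and not centered if $p=2$). We verify the condition of Lemma~\ref{lem:non-sym-criterion} by induction on $n$: for every $v \in L_n$ with $u_{n+1} \notin D(v)$, the group $\St_G(u_{n+1}) \cap \St_G(v)$ acts transitively on $D(v)$. When $n=1$, surjectivity of $A_v$ given by Lemma~\ref{lem:CharacterizeAperiodic} yields an element $g \in G\cap\calG(1)$ with any prescribed label $c$ at $v$ and trivial label at $m$; such a $g$ lies in $\St_G(u_2)\cap\St_G(v)$ and realises the translation by $c$ on $D(v)$. When $n \geq 2$, let $v^{(1)} \in L_1$ denote the level-$1$ ancestor of $v$. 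If $v^{(1)}=m$, then $v,u_{n+1} \in \calT_m$, and since some $e_j$ is a unit modulo $p$ the restriction map $\St_G(m) \to \Aut(\calT_m) \cong \Aut(\calT)$ surjects onto $G$; the inductive hypothesis applied in $\calT_m$ one level lower yields the desired transitivity on $D(v)$. If $v^{(1)}=s\neq m$, Lemma~\ref{lem:HtimesH} (which applies because $\fat{e}$ is not centered when $p=2$) produces, for every $h \in H\cap \St_G(v|_{\calT_s})$, some $g \in [G,G]$ with $g|_{\calT_m}=1$ and $g|_{\calT_s}=h$; this $g$ fixes $u_{n+1}$ and $v$ and acts on $D(v)$ as $h$ acts on $D(v|_{\calT_s})$, and transitivity then follows from Lemma~\ref{lem:H-transitive}.

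To derive the Gelfand-pair statement, apply Theorem~\ref{thm:local-global}: it reduces the claim to verifying that $\bbC[D(v)]$ is multiplicity-free as a $\St_{\overline{G}_{\fat{e}}}(v)$-representation for every vertex $v$. Since the action factors through translations in $\bbZ/m\bbZ$ and is transitive by spherical transitivity, it is the regular representation of a cyclic group and therefore multiplicity-free. The main technical obstacle will be the level-$1$ bookkeeping required to identify the projected image of $\sigma$ with the matrix $A_{u-v}$ of Lemma~\ref{lem:CharacterizeAperiodic}, together with carefully verifying that the restriction map $\St_G(m) \to \Aut(\calT_m)$ surjects onto $G$ in order to invoke the inductive hypothesis.
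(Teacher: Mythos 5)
Your proposal is correct and follows essentially the same route as the paper: the level-$1$ analysis via the surjectivity of $A_t$ from Lemma~\ref{lem:CharacterizeAperiodic}, an induction on the level split into the two cases ``both vertices below $m$'' (handled by fractality of $G$, i.e.\ surjectivity of $\St_G(m)\to G$) and ``distinct level-$1$ ancestors'' (handled by Lemma~\ref{lem:HtimesH} together with Lemma~\ref{lem:H-transitive}), and the local-global principle applied to the regular representation of the cyclic group $\langle\alpha\rangle$ for the Gelfand-pair claim. The only organisational difference is that you route the induction through Lemma~\ref{lem:non-sym-criterion} with the spine $u_n=(m,\dots,m)$, whereas the paper inducts directly on transitivity on $D(u)\times D(v)$; this is a cosmetic variation, not a different argument.
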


\begin{proof}
  As a first step, we show that $\St_G(u)\cap \St_G(v)$ acts transitively on $D(u) \times D(v)$
  for all $u \neq v$ in $L_1$ exactly if $\fat{e}$ is aperiodic modulo $p$.

  Assume that $\fat{e}$ is aperiodic modulo $p$.
  Say $u = i + m\bbZ$ and $v= i+t+m\bbZ$ with $i\in \{1,\dots,m\}$ and $t\in\{1,\dots,m-1\}$.
  By Lemma \ref{lem:CharacterizeAperiodic} \eqref{it:A-surjective} left multiplication with
  the two row circulant matrix
  \begin{equation*}
      \begin{pmatrix}
          e_i & \dots & e_{i+m-1}\\
          e_{i+t} & \dots &e_{i+m-1+t}\\
       \end{pmatrix}
     \end{equation*}
     defines a surjective map onto $(\bbZ/m\bbZ)^2$.
     In particular, we find $c_1,\dots,c_{m} \in \bbZ/m\bbZ$
     such that
     \begin{equation*}
       \sum_{j=0}^{m-1} c_{m-j} e_{i+j} = 1 \quad \text{ and } \quad
       \sum_{j=0}^{m-1} c_{m-j} e_{i+t+j} = 0.
    \end{equation*}
    We deduce that the element $g = b_1^{c_1}b_2^{c_2}\cdots b_m^{c_m} \in G \cap \calG(1)$
    satisfies $g_{(v)}= \id$, whereas $g_{(u)}$ is the cyclic permutation $\alpha$.  Since the
    same argument applies with $u$ and $v$ interchanged, it follows that
    $G \cap \calG(1) = \St_G(u) \cap \St_G(v)$ acts transitively on $D(u) \times D(v)$.

    Conversely, let $t\in \{1,\dots,m-1\}$ be arbitrary. Choose $u = 1 +m\bbZ$ and
    $v = 1+t+m\bbZ$.  Note that an element $g \in \St_G(v)$ which fixes one descendant of $v$
    already acts trivially on $D(v)$. Since the action is locally $2$-transitive, we deduce
    that there is an element $g \in \St_G(u) \cap \St_G(v)= G \cap \calG(1)$ such that
    $g_{(v)} = \id$ but $g_{(u)}$ has order $m=p^k$. Since
    $G \cap \calG(1) = \langle b_1,\dots, b_m\rangle$ and $(G\cap\calG(1))/(G\cap\calG(2))$ is
    abelian, we see that there is some $j \in \{1,\dots,m\}$ such that
    $(b_j^{p^{k-1}})_{(u)} \neq (b_j^{p^{k-1}})_{(v)}$; in other words,
    $e_{m-j+1} \not\equiv e_{m-j+t+1} \bmod p$. We deduce that $\fat{e}$ is aperiodic
    modulo~$p$.

    Assume now that $\fat{e}$ is aperiodic modulo $p$.  The final step is to verify that for
    distinct vertices $u,v \in L_n$ the action of $\St_G(u)\cap\St_G(v)$ on $D(u) \times D(v)$
    is transitive.  We proceed by induction on $n$. The case $n=1$ was already discussed
    above.  Assume now that $n \geq 2$.  We distinguish two cases.

    \emph{Case 1}: $u$ and $v$ have the same predecessor $w \in L_1$ on the first level.
    Since $G$ is fractal (c.f.~\cite{Jone2016}), we have
    $G = G_w = \{g_w \mid g \in \St_G(w)\}$ and the claim follow from the induction
    hypothesis.

    \smallskip

    \emph{Case 2}: $u$ and $v$ have distinct predecessors $u_1\neq v_1$ on the first level $L_1$.
    This means, $u = u_1\tilde{u}$ and $v = v_1\tilde{v}$. It
    follows from Lemma \ref{lem:HtimesH} that $H \times H$ is contained in the image of the
    restriction homomorphism $\Psi_{u_1,v_1}\colon [G,G] \to G\times G$.  Now, by Lemma
    \ref{lem:H-transitive} the group $H \cap \St_G(\tilde{u})$ acts transitively on $D(\tilde{u})$ and similarly
    $H \cap \St_G(\tilde{v})$ acts transitively on $D(\tilde{v})$. This proves the assertion.

    \smallskip

    Finally, assume that $G$ acts locally $2$-transitively and let $P_\xi \leq G$ be a
    parabolic subgroup. Since $G$ is fractal, the action of the stabiliser $\St_G(u)$ on
    $D(u)$ factors through the regular representation of a cyclic group of order $m$ and thus
    the representation on $\bbC[D(u)]$ is multiplicity-free. By the local-global principle
    $(G,P_\xi)$ is a Gelfand pair.
\end{proof}

\begin{remark}\label{rem:Dinfty}
  The assumption that the
  defining vector is not centered for $p=2$ is neccessary.
  Indeed, consider the case $p=2$ and $k=1$. There
  is precisely one GGS-group $G$ acting on the rooted binary tree $\calT$; it is defined by
  the vector $(1,0) \in (\bbZ/2\bbZ)^2$. 
  Note that the
  defining vector is centered and aperiodic.
  This is exactly the group considered in
  Example \ref{ex:dihedral}, where we have seen that $G$ does \emph{not} act
  locally $2$-transitively on $\calT$. It seems possible that groups with centered defining
  vector in general do not act locally $2$-transitively.

  The closure $\overline{G}$ of $G \cong D_\infty$ in $\calG$ is isomorphic to the pro-$2$
  completion of the infinite dihedral group $D_\infty$.  The group $P= \langle b \rangle$ is a
  parabolic subgroup of $G$ and one can verify by direct calculation that $(\overline{G},P)$
  is nevertheless a Gelfand pair. It seems possible, that a weaker assumption
  still suffices the prove a local-global principle for Gelfand pairs.
\end{remark}

\end{document}